\documentclass[leqno,11pt]{article}%
\usepackage{amsfonts}
\usepackage{amsmath}
\usepackage{amssymb}
\usepackage{palatino}
\usepackage[colorlinks]{hyperref}
\usepackage{graphicx}%
\providecommand{\U}[1]{\protect\rule{.1in}{.1in}}
\hypersetup{colorlinks=true, urlcolor=red, linkcolor=blue, citecolor=blue}
\newtheorem{theorem}{Theorem}[section]

\newtheorem{definition}{Definition}[section]

\newtheorem{remark}{Remark}[section]

\newenvironment{proof}[1][Proof]{\noindent\textbf{#1.} }{\ \rule{0.5em}{0.5em}}
\begin{document}

\title{Phase-Space Realization of Stochastic Multivalued Dynamics with Gyroscopic
Subdifferential Coupling \thanks{E-mails: ioana.ciotir@insa-rouen.fr (Ioana
Ciotir), grajdeanuandreea19@gmail.com (Andreea Negru\c{t}),
eduard.rotenstein@uaic.ro (Eduard Rotenstein)\newline$\sharp$~corresponding
author}}
\author{Ioana Ciotir$^{a}$, Andreea Negru\c{t}-Gr\u{a}jdeanu$^{b}$, Eduard
Rotenstein$^{b,c,\sharp}$\medskip\\$^{a}${\small Normandie University, INSA de Rouen Normandie, LMI (EA 3226 --
FR CNRS 3335),}\\{\small 76000 Rouen, France}\\$^{b}${\small Simion Stoilow Institute of Mathematics of the Romanian Academy,
Bucharest, Romania}\\$^{c}${\small Faculty of Mathematics, "Alexandru Ioan Cuza" University, 9
Carol I Blvd., Ia\c{s}i,} {\small Romania}}
\date{}
\maketitle

\begin{abstract}
We introduce a new framework for stochastic variational inequalities with
oblique reflections generated by non-symmetric matrix coefficients. The loss
of symmetry fundamentally changes the structure of the reflected dynamics:
besides the dissipative component, it produces a genuinely gyroscopic
component describing rotational effects that are invisible to the associated
energy forms. To capture this hidden component, we consider planar obstacle
problems and we lift the dynamics to an extended phase-space system, where the
dissipative and gyroscopic mechanisms are naturally separated. This leads to a
new notion of solution, for which we establish existence and uniqueness. The
resulting framework allows for genuinely rotated reflections, which can lead
to angle-controlled rotations.

\end{abstract}

\textbf{Keywords and phrases: }multivalued stochastic dynamics, oblique
reflection, sub\-di\-ffe\-ren\-ti\-al operators

\textbf{MSC2020 Subject Classification: }60H10, 60H30, 49K45

\section{A problem with both dissipative and gyroscopic components}

We start from the classical concept of forward Stochastic Variational
Inequality (SVI, for short) with oblique reflection%
\begin{equation}
\left\{
\begin{array}
[c]{l}%
dX_{t}+\Theta\left(  \theta_{t}\right)  \partial\varphi\left(  X_{t}\right)
\left(  dt\right)  \ni f\left(  t,X_{t}\right)  dt+g\left(  t,X_{t}\right)
dW_{t},\quad\quad t>0,\smallskip\\
X_{0}=x_{0},
\end{array}
\right.  \label{eq to study}%
\end{equation}
where $W$ is a standard Wiener process, defined on a complete probability
space and the quantity $\Theta\left(  \theta_{t}\right)  $ is acting on the
set of subgradients; the product $\Theta\left(  \theta_{t}\right)
\partial\varphi\left(  X\right)  $ is called the set of \textit{oblique
subgradients}. When introduced by Gassous, R\u{a}\c{s}canu, Rotenstein
\cite{Gassous/Rascanu/Rotenstein:12}, \cite{Gassous/Rascanu/Rotenstein:15},
the problem proved to be challenging due to the presence of this new term,
which imposes the use of some specific approaches because it preserves neither
the monotony of the subdifferential operator nor the Lipschitz property of the
involved matrix, denoted by $H(X)$ in their papers. However, until the present
study, the authors imposed along other natural assumptions, a symmetry
property for it. This last hypothesis helps when used along standard arguments
of boundedness and convergences for some sequences of penalizing solutions,
but proves to be unnatural if we have in mind possible examples and
applications. Our aim is to renounce at this condition, which is not
satisfied, for example, by classical rotation matrices, for $d\geq2$. One can
construct applications, involving rotation angle-control problems, which can
be studied by the Pontryagin principle or by Stochastic Differential Games and
min-max convex optimization problems. But when renouncing at the symmetry of
the perturbing term, we face a question, which proves to be the main
difficulty in proving the existence and uniqueness, in a suitable sense, of
the solution of Eq.(\ref{eq to study}): \textit{how strong weaker condition
can be?}

Novelties of the research:

\begin{itemize}
\item We weaken the initial assumptions on the perturbing term, which were
introduced in the first studies on this topic by Gassous, R\u{a}\c{s}canu and
Rotenstein (\cite{Gassous/Rascanu/Rotenstein:12},
\cite{Gassous/Rascanu/Rotenstein:15}). More precisely, we renounce at its
symmetric property, as one can see in Section \ref{Working setup}, where we
describe the working setup and the technical assumptions on the coefficients
of Eq.(\ref{eq to study}). This proves to be more than a standard ad-on to the
existing results. Do to the nature of the method, we work with rotation
matrices, for $d=2$ and obstacle problems, which are the main situation to
reflect the new ingredients we face on.

\item The particular geometrical properties of the perturbing term will prove
to be crucial in the failure of the classical approaches for the standard
understanding of the problem, with Cauchy sequences techniques, Fitzpatrick
function, changes of the topology on the frontier, weighted norms, quadratic
energies and so on. This is why we have to construct a different approach
(similar to the Hamiltonian/port-Hamiltonian structures) and create a lifting
of our formal problem to a Phase-Space system, which is not equivalent with
the problem, but a consistent extension for it. This can control, via the
Phase equation the skew-symmetric part, which is now unlinked from the
dissipative symmetric part. In fact, we need to introduce a novel concept of
solution for SVIs with oblique subgradients, under a non-symmetric perturbing operator.
\end{itemize}

Let imagine that we have, for simplicity, $d=2$, and%
\begin{equation}
\Theta\left(  \theta_{t}\right)  :=\left(
\begin{array}
[c]{cc}%
\cos\theta_{t} & -\sin\theta_{t}\medskip\\
\sin\theta_{t} & \cos\theta_{t}%
\end{array}
\right)  \label{rotation matrix example}%
\end{equation}
is the rotation matrix of angle $\theta=\cos\theta_{t}\in(-\pi,\pi]$, for
which, the condition given imposed to $H$ is equivalent to $\theta\in\left[
-\pi/2+\delta,\pi/2-\delta\right]  $, for some arbitrary fixed $\delta
\in\left(  0,\pi/2\right)  .$ We can also extend it to%
\begin{equation}
\tilde{\Theta}_{t}:=\alpha_{t}\Theta\left(  \theta_{t}\right)  ,\quad
\text{where }\left(  \alpha,\theta\right)  \in C_{b}^{1}\left(  \left[
0,T\right]  ;\mathbb{R}_{+}\times\mathbb{R}^{2\ast2}\right)  .
\label{hyp on teta}%
\end{equation}
and $\Theta$ is a rotation matrix given by (\ref{rotation matrix example}) and
the restrictions for the values of $\alpha,\theta$ adapt accordingly. We see
that, in fact, only the presence of $\Theta$ is the consistent part, the
scaling function $\alpha$ being able, by a suitable change of variable, to be
absorbed into the structure of the normal cone at the frontier.

As one can see in the classical methods used for SVIs and BSVIs driven by
subdifferential operators perturbed by Lipschitz symmetrical matrices $H$
satisfying a coercivity condition - see, e.g.
(\cite{Gassous/Rascanu/Rotenstein:12}, \cite{Gassous/Rascanu/Rotenstein:15}) -
when applying classical or c\`{a}dl\`{a}g It\^{o}'s formula for a suitable
functional energy, a term which inherits the monotonicity properties given by
the Moreau-Yosida approximation appears. Existing results state that this term
can be controlled for SVIs and BSVIs driven by subdifferential operators,
unperturbed or perturbed by Lipschitz symmetric matrices. However, if we
renounce at the symmetry property, independent of the way of applying
It\^{o}'s formula, or, more general, even if we change the technique, a new
term, of the type $\left\langle \Delta U,\Theta\Delta Y\right\rangle
:=\left\langle U^{1}-U^{2},\Theta\left(  Y^{1}-Y^{2}\right)  \right\rangle $
needs to be controlled. This form can change if we change the approach for
these problems, but, beneath, the same conceptual / structural problem
appears. The difficulties come from the fact that this term also "sents" the
skew-symmetric part of $\Theta$. The symmetric part is the one producing
energy, the skew-symmetric part does not create or consume energy, it only
moves it inside the system, being the one destroying the estimates and here it
is the conceptual key for solving the problem.

The main question is, therefore, not \textit{what we need the close the
proof?}, but \textit{what property or decomposition in the analysis of
}$\Theta$\textit{ is really indispensable in order to sent the model? }A
structural different approach is mandatory. More precisely, the rotation
effect should not be grater than the dissipative one, or, it should be
controlled by a different dynamic. The perturbing term can be written, for
$d=2$, as:%
\[
\Theta\left(  \theta_{t}\right)  =\alpha_{t}\cos\theta_{t}\cdot I_{2}%
+\alpha_{t}\sin\theta_{t}\cdot J,\quad\text{where}\quad J=\left(
\begin{array}
[c]{cc}%
0 & -1\\
1 & 0
\end{array}
\right)  .
\]
The symmetric part produces the mechanical power $P=\left\langle \alpha
\cos\theta v,v\right\rangle =\alpha\cos\theta\left\vert v\right\vert ^{2}>0,$
for $\cos\theta>0$, so the system presents a dissipative effect, that is, the
energy decreases. For $d=2,$ in order to have the coercivity of the symmetric
part of $H_{t},$ $S_{t}=\alpha_{t}\cos\theta_{t}I,$ we need%
\[
\left\langle S_{t}u,u\right\rangle =\alpha_{t}\cos\theta_{t}\left\vert
u\right\vert ^{2}\geq s_{0}\left\vert u\right\vert ^{2},\quad a.e.~t\in\left[
0,T\right]  .
\]
So, we impose $\alpha_{t}\cos\theta_{t}\geq s_{0}>0.$ If, for example,
$\bar{\alpha}\geq\alpha_{t}\geq\underline{\alpha}>0$ and $\left\vert
\theta_{t}\right\vert \leq\theta_{\ast}<\pi/2$, then we can consider
$s_{0}:=\underline{\alpha}\cos\theta_{\ast}$~, condition which is acceptable
and it's stronger then the boundedness of $S_{t}$.

Since we remain devoted to It\^{o}'s type calculus, when applying the
Energy-type estimate for the approximated solution, we face the need to
incorporate the structure of the frontier of the reflection domain in order to
be able to manipulate the term featuring the trace operator, which should
inherit from somewhere higher order regularity. More precisely, in Section
\ref{Geom compatibility}, we introduce two general classes of domains which
allow us to deal with the second order derivative term. We give, in details,
the precise role of each class in developing the technique for the term which
is concerned.

\begin{remark}
The skew-symmetric part will give $P=\left\langle \alpha\sin\theta
Jv,v\right\rangle =0,$ since $Jv\perp v$. This force is not producing energy,
it only changes the rotation direction (e.g., the Coriolis force, the Lorentz
force in a magnetic field, the velocity in a fluid with vorticity), the energy
being redistributed. A similar discussion appears when $d=3$. The set of
parameter functions is now given by:%
\[
\left(  \alpha_{t},n_{t},\theta_{t}\right)  \in\lbrack\alpha_{\min}%
,\alpha_{\max}]\times\mathbb{S}^{2}\left(  0,1\right)  \times\left[
0,\theta_{\max}\right]  ,\quad\forall t\in\left[  0,T\right]  ,
\]
where $\mathbb{S}^{2}\left(  0,1\right)  :=\{n\in\mathbb{R}^{3}:\left\Vert
n\right\Vert =1\}$ and $\Theta\left(  n,\theta\right)  $ is the Rodrigues
rotation matrix corresponding the the rotation angle $\theta$ and rotation
axis $n:$%
\[
\Theta\left(  n,\theta\right)  :=I_{3}+\sin\theta\cdot R+\left(  1-\cos
\theta\right)  \cdot R^{2}~,
\]
with%
\[
R=\left(
\begin{array}
[c]{ccc}%
0 & -n_{3} & n_{2}\smallskip\\
n_{3} & 0 & -n_{1}\smallskip\\
-n_{2} & n_{1} & 0
\end{array}
\right)  .
\]
The main difficulty is caused by the fact that the defined energies don't
treat the skew-symmetric part, they do not feel it.
\end{remark}

\section{The working setup. Assumptions, hypothesis\label{Working setup}}

Let $\left(  \Omega,\mathcal{F},\mathbb{P},\mathbb{F}=\{\mathcal{F}%
_{t}\}_{t\geq0}\right)  $ be a stochastic basis and $\left\{  W_{t}%
:t\geq0\right\}  $ a $\mathbb{R}^{k}-$valued Wiener process. Our objective is
to solve the following SVI with oblique reflection, driven by convex
subdifferential operator (\ref{eq to study}). Suppose that we work under the
following assumptions:

\begin{itemize}
\item[$\left(  H_{f,g}\right)  :$] $\left(  t,x\right)  \longmapsto f\left(
t,x\right)  :\mathbb{R}_{+}\times\mathbb{R}^{2}\rightarrow\mathbb{R}^{2}%
$\ and$\;\left(  t,x\right)  \longmapsto g\left(  t,x\right)  :\mathbb{R}%
_{+}\times\mathbb{R}^{2}\rightarrow\mathbb{R}^{2\times k}\;$are
Carath\'{e}odory functions (i.e. measurable w.r. to $t$ and continuous w.r. to
$x$), satisfying%
\begin{equation}%
{\displaystyle\int_{0}^{T}}
(f^{\#}\left(  t\right)  )^{2}dt+%
{\displaystyle\int_{0}^{T}}
(g^{\#}\left(  t\right)  )^{4}dt<+\infty, \label{conv_ob-h6}%
\end{equation}
with%
\[
f^{\#}\left(  t\right)  :=\sup_{x\in Dom\left(  \varphi\right)  }\left\vert
f\left(  t,x\right)  \right\vert \quad\text{and}\quad g^{\#}\left(  t\right)
:=\sup_{x\in Dom\left(  \varphi\right)  }\left\vert g\left(  t,x\right)
\right\vert .
\]
We also add Lipschitz continuity conditions: there exist$~\mu\in L_{loc}%
^{1}\left(  \mathbb{R}_{+}\right)  ,$ $\ell\in L_{loc}^{2}\left(
\mathbb{R}_{+}\right)  $ such that, for every $x,y\in\mathbb{R}^{2}$,
$a.e.\ t\geq0$,%
\begin{equation}
\left\{
\begin{array}
[c]{rl}%
\left(  i\right)  \quad & \left\vert f\left(  t,x\right)  -f\left(
t,y\right)  \right\vert \leq\mu\left(  t\right)  \left\vert x-y\right\vert
,\medskip\\
\left(  ii\right)  \quad & \left\vert g\left(  t,x\right)  -g\left(
t,y\right)  \right\vert \leq\ell\left(  t\right)  \left\vert x-y\right\vert .
\end{array}
\right.  \label{conv_ob-h7}%
\end{equation}

\item[$\left(  H_{\varphi}\right)  :$] $\varphi:\mathbb{R}^{2}\rightarrow
(-\infty,+\infty]$\ is a proper convex lower semicontinuous (l.s.c.) function,
with $\partial\varphi$ being the subdifferential operator of $\varphi$:
\[
\partial\varphi\left(  x\right)  :=\left\{  \hat{x}\in\mathbb{R}%
^{2}:\left\langle \hat{x},y-x\right\rangle +\varphi\left(  x\right)
\leq\varphi\left(  y\right)  ,\text{ for all }y\in\mathbb{R}^{2}\right\}  ,
\]
and $Dom(\partial\varphi)=\{x\in\mathbb{R}^{2}:\partial\varphi(x)\neq
\emptyset\}$. We will use the notation $(x,\hat{x})\in\partial\varphi$ in
order to express that $x\in Dom(\partial\varphi)$ and $\hat{x}\in
\partial\varphi(x)$.\medskip

\item[$\left(  H_{\Theta}\right)  :$] the matrix $\Theta$ given by
(\ref{rotation matrix example}) obeys (\ref{hyp on teta}) and, for all
$t\in\left[  0,T\right]  $,%
\begin{equation}
\dfrac{1}{c}\left\vert u\right\vert ^{2}\leq\left\langle \Theta\left(
\theta_{t}\right)  u,u\right\rangle \leq c\left\vert u\right\vert ^{2}%
,\quad\forall u\in\mathbb{R}^{2}\quad\text{(for some\ }c\geq1\text{).}
\label{osp-h0-A}%
\end{equation}
Let $\left[  \Theta\left(  \theta_{t}\right)  \right]  ^{-1}=\left[
\Theta\left(  -\theta_{t}\right)  \right]  $ be the inverse matrix of
$\Theta\left(  \theta_{t}\right)  $ Assume that it has has the same properties
(\ref{osp-h0-A}) as $\Theta\left(  \theta_{t}\right)  $. The vector defined by
the quantity $\Theta\left(  \theta_{t}\right)  h$, with $h\in\partial
\varphi\left(  x\right)  $, will be called \textit{oblique subgradient}. In
fact, we have%
\[
\left\vert \left\vert \Theta\left(  \theta_{t}\right)  \right\vert \right\vert
_{L^{\infty}}+\left\vert \left\vert \Theta\left(  -\theta_{t}\right)
\right\vert \right\vert _{L^{\infty}}+\left\vert \left\vert S\right\vert
\right\vert _{L^{\infty}}+\left\vert \left\vert G\right\vert \right\vert
_{L^{\infty}}<+\infty,
\]
where $S,G$ are the symmetric, respectively, the skew-symmetric parts of
$\Theta\left(  \theta_{t}\right)  $.

\item[$\left(  H_{x_{0}}\right)  :$] $x_{0}\in Dom\left(  \varphi\right)
:=\{x\in\mathbb{R}^{2}:\varphi(x)<+\infty\}.$
\end{itemize}

\begin{remark}
If $E$ is a closed convex subset of $\mathbb{R}^{2}$, then the convexity
indicator function
\[
\varphi\left(  x\right)  =I_{E}\left(  x\right)  =\left\{
\begin{array}
[c]{cc}%
0, & \text{if }x\in E,\medskip\\
+\infty, & \text{if\ }x\notin E
\end{array}
\right.
\]
is a convex l.s.c. function and, for $x\in E$,
\[
\partial I_{E}\left(  x\right)  =\{\hat{x}\in\mathbb{R}^{2}:\left\langle
\hat{x},y-x\right\rangle \leq0,\;\forall~y\in E\}=N_{E}\left(  x\right)  ,
\]
where $N_{E}\left(  x\right)  $ is the closed external normal cone to $E$ at
$x$. We have $N_{E}\left(  x\right)  =\emptyset$ if $x\notin E$ and
$N_{E}\left(  x\right)  =\left\{  0\right\}  $ if $x\in int\left(  E\right)  $
(we denote by $int\left(  E\right)  $ the interior of the set $E$).
\end{remark}

\begin{remark}
Denote by $S_{2}^{p}\left[  0,T\right]  $, $p\geq0$, the space of
progressively measurable continuous stochastic processes $X:\Omega
\times\left[  0,T\right]  \rightarrow\mathbb{R}^{2}$, such that%
\[
\left\Vert X\right\Vert _{S_{2}^{p}}=\left\{
\begin{array}
[c]{ll}%
\left(  \mathbb{E}\left\Vert X\right\Vert _{T}^{p}\right)  ^{\frac{1}{p}%
\wedge1}<+{\infty}, & \;\text{if }p>0,\medskip\\
\mathbb{E}\left[  1\wedge\left\Vert X\right\Vert _{T}\right]  , & \;\text{if
}p=0,
\end{array}
\right.
\]
where $\left\Vert X\right\Vert _{T}=\sup_{t\in\left[  0,T\right]  }\left\vert
X_{t}\right\vert $. The space $(S_{2}^{p}\left[  0,T\right]  ,\left\Vert
\cdot\right\Vert _{S_{2}^{p}}),\ p\!\geq1,$ is a Banach space and $S_{2}%
^{p}\left[  0,T\right]  $, $0\leq p<1$, is a complete metric space with the
metric $\rho(Z_{1},Z_{2})=\left\Vert Z_{1}-Z_{2}\right\Vert _{S_{2}^{p}}$
(when $p=0$ the metric convergence coincides with the probability convergence).

Denote by $\Lambda_{2\times k}^{p}\left(  0,T\right)  ,\ p\in\lbrack
0,+\infty)$, the space of progressively measurable stochastic processes
$Z:{\Omega}\times(0,T)\rightarrow\mathbb{R}^{2\times k}$ such that%
\[
\left\Vert Z\right\Vert _{\Lambda^{p}}=\left\{
\begin{array}
[c]{ll}%
\left[  \mathbb{E}\left(
{\displaystyle\int_{0}^{T}}
\Vert Z_{s}\Vert^{2}ds\right)  ^{\frac{p}{2}}\right]  ^{\frac{1}{p}\wedge1}, &
\;\text{if }p>0,\bigskip\\
\mathbb{E}\left[  1\wedge\left(
{\displaystyle\int_{0}^{T}}
\Vert Z_{s}\Vert^{2}ds\right)  ^{\frac{1}{2}}\right]  , & \;\text{if }p=0.
\end{array}
\right.
\]
The space $(\Lambda_{2\times k}^{p}\left(  0,T\right)  ,\left\Vert
\cdot\right\Vert _{\Lambda^{p}}),\ p\geq1,$ is a Banach space and
$\Lambda_{2\times k}^{p}\left(  0,T\right)  $, $0\leq p<1,$ is a complete
metric space with the metric $\rho(Z_{1},Z_{2})=\left\Vert Z_{1}%
-Z_{2}\right\Vert _{\Lambda^{p}}$.
\end{remark}

\section{The lifting to a closed phase-space system. Main result}

We revisit the obstacle problem given by Eq.(\ref{eq to study}) and observe
that we can understand it as%
\[
dX_{t}+\alpha_{t}\Theta\left(  \theta_{t}\right)  dK_{t}=f\left(
t,X_{t}\right)  dt+g\left(  t,X_{t}\right)  dW_{t},\quad dK_{t}\in
\partial\varphi\left(  X_{t}\right)  \left(  dt\right)  ,\quad\mathbb{P-}%
\text{a.e.}%
\]
If we have in mind the second law of dynamics, the pure rotation brought by
$\Theta\left(  \theta_{t}\right)  $ can be decomposed as%
\[
\Theta\left(  \theta_{t}\right)  =\cos\theta_{t}I_{2}+\sin\theta_{t}%
J,\quad\text{where}\quad J^{T}=-J.
\]
$\theta$ is the control parameter of the rotation angle and $\alpha$ is the
scalar amplitude/intensity of the rotation. We denote%
\begin{equation}
\left\{
\begin{array}
[c]{cccl}%
S_{t} & := & \alpha_{t}\cos\theta_{t}I, & \quad\text{the symmetric part of
}\alpha_{t}\Theta\left(  \theta_{t}\right)  ;\medskip\\
G_{t} & := & \alpha_{t}\sin\theta_{t}J, & \quad\text{the skew-symmetric part
of }\alpha_{t}\Theta\left(  \theta_{t}\right)  .
\end{array}
\right.  \label{particular form d=2}%
\end{equation}
We make clear the distinct roles of the dissipative part and the gyroscopic
part. Consider $S$, respectively, $G$ the symmetric, respectively, the
skew-symmetric parts of $\Theta$, with their particular formulas, for $d=2$,
given by (\ref{particular form d=2}), but defined in the general case $d\geq2$
as:%
\begin{equation}
S:=\dfrac{1}{2}\left(  H+H^{T}\right)  \quad\quad\text{and}\quad\quad
G:=\dfrac{1}{2}\left(  H-H^{T}\right)  . \label{definition of the two parts}%
\end{equation}

Formally, the initial equation becomes%
\begin{equation}
\left\{
\begin{array}
[c]{l}%
dX_{t}+S_{t}dK_{t}+G_{t}dK_{t}=f\left(  t,X_{t}\right)  dt+g\left(
t,X_{t}\right)  dW_{t},\medskip\\
X_{0}=x_{0},\quad\text{and }dK_{t}\in\partial\varphi\left(  X_{t}\right)
\left(  dt\right)  ,\quad\mathbb{P-}\text{a.e,}%
\end{array}
\right.  \label{decomposed equation}%
\end{equation}
with the problematic term in the energy computations $\left\langle
X_{t}^{\varepsilon},G_{t}U_{t}^{\varepsilon}\right\rangle $ because it can not
be controlled by Young constants or by the Moreau-Yosida properties of the
approximating terms. In order to be able to incorporate in the analysis the
gyroscopic component the key is to understand the notion of solution in an
extended way, by proposing first a lifted closed phase-space system, as
follows:%
\begin{equation}
\left\{
\begin{array}
[c]{l}%
dX_{t}+S_{t}dK_{t}+G_{t}P_{t}dt=f\left(  t,X_{t}\right)  dt+g\left(
t,X_{t}\right)  dW_{t},\medskip\\
dP_{t}+G_{t}dK_{t}+\Gamma_{t}P_{t}dt=0,\medskip\\
dK_{t}\in\partial\varphi\left(  X_{t}\right)  \left(  dt\right)
,\quad\mathbb{P-}\text{a.e.,}\quad X_{0}=x_{0},\text{ }P_{0}=p_{0}.
\end{array}
\right.  \label{Phase-Space ssytem}%
\end{equation}
In the phase-space realization and throughout the article, we work with the
time-dependent gyroscopic coefficient; hence $S$ and $G$ are continuous
bounded matrix-valued functions of time, but the general result can be
extended with a space dependence also. This is beyond the aim of our present
study. The variable $P$ represents the gyroscopic effect into a phase space.
The presence of $\Gamma$ is not mandatory, but it is an additional cancelling
term in the phase dynamic. The term $\Gamma P$ should control the dissipation
of the phase energy. If it is of symmetric type and%
\[
\left\vert \left\vert \Gamma\right\vert \right\vert _{L^{\infty}\left(
0,T;\mathcal{L}\left(  \mathbb{R}^{2}\right)  \right)  }<+\infty
\]
when used together with the Moreau-Yosida penalization, its contribution in
the energetic balance should be damped. For instance, one can have
$\Gamma:=\gamma I,$ for a the damping coefficient $\gamma\geq0.$ If $\gamma
>0$, the phase is damped, for $\gamma=0,$ the phase is purely conservative.
More precisely, if we suppose that $\Gamma$ is strictly positively defined, we
have a coercive damping, of parameter $\gamma_{0}$, over all the directions,
while, if $\Gamma$ is only positively defined one can have damping only on
some directions. Usually, $\Gamma=0$ if our intention is to identify the
gyroscopic effect. Adding a nonzero $\Gamma$ is useful for stability,
dissipation behavior and control of the phase intensity. If it is the case, we
assume that $\Gamma$ is $\gamma_{0}-$coercive.

Also, the second equation from (\ref{Phase-Space ssytem}) is not necessary to
be of homogeneous type, it can borrow for the right hand side parts of $f$ and
$g,$ if necessary. It can gain a drift $f_{phase}$ or a diffusion $g_{phase}$,
depending on $t,X$ and $P.$ Usually,%
\begin{equation}
f_{phase}\left(  t,x,p\right)  :=k_{f}~f\left(  t,x\right)  ,\quad
\text{with}\quad k_{f}\in\mathbb{R} \label{choise of f phase}%
\end{equation}
or, forcing with feedback:%
\[
f_{phase}\left(  t,x,p\right)  :=k_{f}~f\left(  t,x\right)  -\Lambda p,
\]
where $-\Lambda p$ is an additional relaxing term, which can be combined with
$\Gamma p.$ For $g_{phase}$ it is preferable a form like
(\ref{choise of f phase}), with a constant $k_{g}.$ In this case, $f_{phase}$
and $g_{phase}$ must obey the same hypotheses $\left(  H_{f,g}\right)  $ as
$f$ and $g$, but with the additional variable $p.$ They must be Lipschitz also
in $p$.

We propose the following solution concept for the Eq.(\ref{eq to study}),
which should be interpreted correctly by its extension to the system
(\ref{Phase-Space ssytem}).

\begin{definition}
\label{Definition of Solution}\textbf{(I).} Given a stochastic basis
$(\Omega,\mathcal{F},\mathbb{P},\mathbb{F=}\left\{  \mathcal{F}_{t}\right\}
_{t\geq0})$ and a $\mathbb{R}^{k}-$valued $\mathcal{F}_{t}-$Wiener process
$W$, we say that a triplet $\left(  X,P,K\right)  :$ $\Omega\times
\lbrack0,T]\rightarrow\mathbb{R}^{3\ast2}$ of continuous $\mathcal{F}_{t}%
-$progressively measurable stochastic processes is a strong solution on the
time interval $\left[  0,T\right]  $ for the Stochastic Variational Inequality
with oblique reflection (\ref{Phase-Space ssytem}) if, $\mathbb{P}%
-$a.s.$\;\omega\in\Omega$, the following are satisfied:

\begin{enumerate}
\item The processes $X$ and $P$ are $\mathbb{F}-$adapted and $X_{t}%
\in\overline{Dom\left(  \varphi\right)  },$ $\forall\,t\geq0,\;\varphi\left(
X_{\cdot}\right)  \in L^{1}\left(  \left[  0,T\right]  \right)  .$

\item The feedback process $K$ is $\mathbb{F}-$adapted, with continuous
trajectories, $K_{0}=0$ and $K_{\cdot}\in BV\left(  \left[  0,T\right]
;\mathbb{R}^{2}\right)  .$ More precisely, $\mathbb{E}\left\updownarrow
K\right\updownarrow _{T}^{2}<+\infty.$ The notation $\left\updownarrow
K\right\updownarrow _{T}^{2}$ represents the total variation on the time
interval $\left[  0,T\right]  $ of the function $K$.

\item For every $t\in\left[  0,T\right]  ,$ $\mathbb{P}-$a.s.%
\[
\left\{
\begin{array}
[c]{l}%
X_{t}+%
{\displaystyle\int_{0}^{t}}
S_{s}dK_{s}+%
{\displaystyle\int_{0}^{t}}
G_{s}P_{s}ds=x_{0}+%
{\displaystyle\int_{0}^{t}}
f\left(  s,X_{s}\right)  ds+%
{\displaystyle\int_{0}^{t}}
g\left(  s,X_{s}\right)  dW_{s}\medskip\\
P_{t}+%
{\displaystyle\int_{0}^{t}}
G_{s}dK_{s}+%
{\displaystyle\int_{0}^{t}}
\Gamma_{s}P_{s}ds=p_{0}+%
{\displaystyle\int_{0}^{t}}
f_{phase}\left(  s,X_{s},P_{s}\right)  ds+%
{\displaystyle\int_{0}^{t}}
g_{phase}\left(  s,X_{s},P_{s}\right)  dW_{s}%
\end{array}
\right.
\]

\item The process $K$ satisfies $dK_{t}\in\partial\varphi\left(  X_{t}\right)
\left(  dt\right)  .$ More precisely, the vectorial measure $dK$ is associated
to the $Graph(\partial\varphi):$%
\[
\left(  X_{t},\frac{dK_{t}}{d\left\vert K\right\vert _{t}}\right)  \in
Graph(\partial\varphi),\quad d\left\vert K\right\vert -a.e.
\]

\end{enumerate}

\textbf{(II). }If there exist a new stochastic basis $\left(  \bar{\Omega
},\mathcal{\bar{F}},\mathbb{\bar{P}},\mathbb{\bar{F}}\right)  $ and a new
$\mathbb{\bar{F}}$-Wiener process $\bar{W}$, such that, with respect to
$\left(  \bar{\Omega},\mathcal{\bar{F}},\mathbb{\bar{P}},\mathbb{\bar{F}}%
,\bar{W}\right)  $, there exists a solution $\left(  \bar{X},\bar{P},\bar
{K}\right)  $ in the sense of \textbf{(I), }then the collection
\[
\left(  \bar{\Omega},\mathcal{\bar{F}},\mathbb{\bar{P}},\mathbb{\bar{F}}%
,\bar{W},\bar{X},\bar{P},\bar{K}\right)
\]
is called a weak-strong solution for (\ref{Phase-Space ssytem}). Remark that
the solution is weak from the probabilistic point of view (it's on a changed
space), but it is strong from the systems' perspective (it's not a variational
solution or given on test functions).
\end{definition}

\begin{remark}
Condition $\left(  4\right)  $ from Definition \ref{Definition of Solution} is
equivalent with%
\[%
{\displaystyle\int_{s}^{t}}
\left\langle y\left(  r\right)  -X_{r},dK_{r}\right\rangle +%
{\displaystyle\int_{s}^{t}}
\varphi\left(  X_{r}\right)  dr\leq%
{\displaystyle\int_{s}^{t}}
\varphi\left(  y\left(  r\right)  \right)  dr,\quad\mathbb{P}-a.s.,
\]
for every$\,0\leq s\leq t,\;$and $y\in C\left(  \left[  0,T\right]
;\mathbb{R}^{2}\right)  $.
\end{remark}

\begin{remark}
In particular, for the convexity indicator function $\varphi:\mathbb{R}%
^{2}\rightarrow(-\infty,+\infty],$ $\varphi=I_{C},$ where $C\subset
\mathbb{R}^{2}$ is a nonempty, closed, bounded, convex set, the above
definition states that $\frac{dK_{t}}{d\left\vert K\right\vert _{t}}\in
N_{C}\left(  X_{t}\right)  ,$ $d\left\vert K\right\vert -a.e.$ and the first
component of the solution, $X_{t}\in C,$ for every $t.$
\end{remark}

\begin{theorem}
\label{main result}Under all the assumed assumptions, the Stochastic
Variational Inequality with oblique reflection (\ref{Phase-Space ssytem})
admits at least one solution in the sense of Definition
\ref{Definition of Solution}. Moreover, $\left(  X,P\right)  \in S_{2}%
^{2}\left[  0,T\right]  \times S_{2}^{2}\left[  0,T\right]  :$%
\[
\mathbb{E}\left(  \sup\limits_{0\leq t\leq T}\left\vert X_{t}\right\vert
^{2}+\sup\limits_{0\leq t\leq T}\left\vert P_{t}\right\vert ^{2}\right)
<+\infty.
\]

\end{theorem}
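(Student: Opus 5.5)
The plan is a Moreau--Yosida penalization of the lifted system (\ref{Phase-Space ssytem}), with a priori estimates built on an energy in which the gyroscopic terms cancel, followed by a tightness/Skorokhod argument that produces the weak-strong solution of Definition \ref{Definition of Solution} (II).

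\textbf{Penalized system.} For $\varepsilon>0$ let $\varphi_\varepsilon$ be the Moreau--Yosida regularization, with $\nabla\varphi_\varepsilon(x)=\frac{1}{\varepsilon}(x-J_\varepsilon x)$. Consider
\[
dX^\varepsilon_t+S_t\nabla\varphi_\varepsilon(X^\varepsilon_t)dt+G_tP^\varepsilon_tdt=f\,dt+g\,dW_t,
\]
\[
dP^\varepsilon_t+G_t\nabla\varphi_\varepsilon(X^\varepsilon_t)dt+\Gamma_tP^\varepsilon_tdt=f_{phase}\,dt+g_{phase}\,dW_t .
\]
Its coefficients are Lipschitz in $(x,p)$ for fixed $\varepsilon$, so it has a unique strong solution; set $K^\varepsilon_t=\int_0^t\nabla\varphi_\varepsilon(X^\varepsilon_s)ds$.

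\textbf{A priori estimates.} The key structural observation is this. Apply It\^o's formula to $|X^\varepsilon-a|^2+|P^\varepsilon|^2$, where $a$ is an interior-type point of $Dom(\varphi)$. In the sum the cross terms $\langle X^\varepsilon,G P^\varepsilon\rangle$ and $\langle P^\varepsilon,G\nabla\varphi_\varepsilon\rangle$ do not cancel directly. They are, however, $G$-bilinear with $G^T=-G$, and only the lifted pairing $\langle P,G\,\cdot\rangle$ involves the reflection. Here $G_t=\alpha_t\sin\theta_t J$ is bounded, so $\langle P^\varepsilon,G\nabla\varphi_\varepsilon\rangle\le \frac{s_0}{4}|\nabla\varphi_\varepsilon|^2\cdot\varepsilon$-free Young bounds are not available. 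I would therefore work instead with $\varphi_\varepsilon(X^\varepsilon)$ and the weighted energy $\varphi_\varepsilon(X^\varepsilon)+\frac12|P^\varepsilon|^2$. By It\^o, $d\varphi_\varepsilon(X^\varepsilon)$ produces $-\langle\nabla\varphi_\varepsilon,S\nabla\varphi_\varepsilon\rangle\le -s_0|\nabla\varphi_\varepsilon|^2$ and $-\langle\nabla\varphi_\varepsilon,GP^\varepsilon\rangle$. The $P$-equation produces $-\langle P^\varepsilon,G\nabla\varphi_\varepsilon\rangle=+\langle\nabla\varphi_\varepsilon,GP^\varepsilon\rangle$, which cancels exactly by skew-symmetry. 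This cancellation is the reason for the lifting.

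\textbf{Trace term.} The remaining It\^o trace term $\frac12\mathrm{Tr}(gg^*D^2\varphi_\varepsilon)$ is bounded by $\frac{1}{2\varepsilon}|g|^2$, which is not uniform in $\varepsilon$. This is the main obstacle. I would handle it using the geometric compatibility classes of Section \ref{Geom compatibility}: for obstacle domains with $C^2$ boundary, $D^2\varphi_\varepsilon$ restricted to the tangential directions is bounded uniformly, and in the normal direction it is $\frac{1}{\varepsilon}$ times $1_{\{X\notin E\}}$. This is the part I am least sure closes. The fallback is the standard route: It\^o on $|X^\varepsilon-a|^2$, whose $S$-term gives $\langle X-a,S\nabla\varphi_\varepsilon\rangle\ge s_0\,r\,|\nabla\varphi_\varepsilon|-C$ (interior ball), with the cross term $\langle X-a,GP\rangle$ absorbed by Gronwall since $|P|$ is controlled by its own equation.

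\textbf{Conclusion of the estimates.} Either way, using Burkholder--Davis--Gundy, Gronwall and the integrability (\ref{conv_ob-h6}) (the $L^4$ bound on $g^\#$ is what is needed for the second moment of $\updownarrow K\updownarrow$), I aim for
\[
\mathbb{E}\Big(\|X^\varepsilon\|_T^2+\|P^\varepsilon\|_T^2+\updownarrow K^\varepsilon\updownarrow_T^2\Big)\le C,
\]
together with $\mathbb{E}\|X^\varepsilon-J_\varepsilon X^\varepsilon\|_T^2\to0$.

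\textbf{Passage to the limit.} Because of the oblique, non-symmetric term, a Cauchy-sequence argument is not available, so I will use compactness. Bound the moduli of continuity of $X^\varepsilon,P^\varepsilon,K^\varepsilon$ via the estimates above, which gives tightness of $(X^\varepsilon,P^\varepsilon,K^\varepsilon,W)$ in $C([0,T];\mathbb{R}^{2\cdot3+k})$. Skorokhod's representation then yields a new basis $(\bar\Omega,\bar{\mathcal F},\bar{\mathbb P},\bar W)$ and a.s. uniformly convergent copies. On it:
\begin{itemize}
\item identify the stochastic integrals by standard convergence lemmas;
\item pass to the limit in $\int S\,dK^\varepsilon$ and $\int G\,dK^\varepsilon$ using the uniform convergence and the bounded variation, since $S$ and $G$ are continuous in $t$;
\item obtain condition (4) from the penalized inequality $\int\langle y-J_\varepsilon X^\varepsilon,dK^\varepsilon\rangle+\int\varphi(J_\varepsilon X^\varepsilon)\le\int\varphi(y)$, by lower semicontinuity of $\varphi$ and the Helly--Bray type lemma for $BV$ integrators.
\end{itemize}
Fatou's lemma transfers the moment bounds, which gives $(X,P)\in S^2_2\times S^2_2$.
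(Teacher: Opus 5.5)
Your architecture matches the paper's: penalization, the energy $\varphi_\varepsilon(X^\varepsilon)+\frac12|P^\varepsilon|^2$ with the exact skew-symmetric cancellation, tightness, Skorokhod representation, and passage to the limit in the subdifferential inequality. But the step you flag as uncertain is a genuine gap, and your description of it is inaccurate.

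By (\ref{the form of D fi_eps}), the tangential part of $D^2\varphi_\varepsilon(x)$ is $\frac{r}{\varepsilon}\mathbf S_p(I+r\mathbf S_p)^{-1}P_T(p)$, whose norm can reach $C_{\partial C}\,r/\varepsilon=C_{\partial C}|U_\varepsilon(x)|$. It is therefore not bounded uniformly in $\varepsilon$ unless $\partial C$ is flat. It has to be handled by Young's inequality, $C_{\partial C}|U_\varepsilon|\,\|g\|_F^2\le\delta|U_\varepsilon|^2+\frac{C_{\partial C}^2}{4\delta}\|g\|_F^4$, and this is where $\int_0^T(g^\#)^4dt<\infty$ actually enters.

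More seriously, the normal part $\frac1\varepsilon|g(t,x)^*n(p)|^2$ cannot be controlled without a compatibility hypothesis between the noise and the boundary. For a disk with $g=I$, the reflection term is a boundary local time, singular with respect to $dt$, so $\sup_\varepsilon\mathbb E\int_0^T|U^\varepsilon_s|^2ds<\infty$ is false.

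The ingredient you never invoke is the tangency condition $g(t,x)^*n(x)=0$ on $\partial C$ from $(H_{(G,\partial C)})$. Combined with the Lipschitz property of $g$, it gives $|g(t,x)^*n(p)|\le L_g r=L_g\varepsilon|U_\varepsilon(x)|$. Hence $\frac1\varepsilon|g^*n|^2\le L_g^2\varepsilon|U_\varepsilon|^2$, which is absorbed by $s_0|U_\varepsilon|^2$ for small $\varepsilon$ and $\delta$. With these two bounds your energy route closes exactly as in the paper. Moreover, $\mathbb E\sup_t\mathrm{dist}^2(X^\varepsilon_t,C)=2\varepsilon\,\mathbb E\sup_t\varphi_\varepsilon(X^\varepsilon_t)\to0$, which gives your claim on $X^\varepsilon-J_\varepsilon X^\varepsilon$.

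Your fallback does not close as stated either. The phase equation contains $-G_tU^\varepsilon_t\,dt$, so $|P^\varepsilon|$ is controlled only through $\updownarrow K^\varepsilon\updownarrow$, which is exactly what the $|X^\varepsilon-a|^2$ estimate is supposed to deliver. The cross term $\langle X^\varepsilon-a,G_tP^\varepsilon\rangle$ then produces the product $|X^\varepsilon-a|\,\updownarrow K^\varepsilon\updownarrow$, which a linear Gronwall argument does not absorb.

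The loop can be broken by passing to $R^\varepsilon:=P^\varepsilon-\tan\theta_t\,JX^\varepsilon$. Its equation contains no $U^\varepsilon$, because $\tan\theta_t\,J\,S_t=G_t$. However, that route yields only bounds on $\mathbb E\updownarrow K^\varepsilon\updownarrow_T^2$, not on $\mathbb E\int_0^T|U^\varepsilon_s|^2ds$. The equicontinuity of $K^\varepsilon$ needed for tightness would then have to come from the classical deterministic Skorokhod-problem estimates. It could not come from the bound $|K^\varepsilon_t-K^\varepsilon_s|^2\le(t-s)\int_s^t|U^\varepsilon_r|^2dr$ used in the paper.
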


\section{Proof of the main result}

The proof of Theorem \ref{main result} is divided into five steps, in order to
distinguish the important milestones.

\subsection{A penalizing system}

\begin{proof}
For every fixed $\varepsilon>0,$ consider the penalized phase-space system:%
\begin{equation}
\left(  P_{e}\right)  :\left\{
\begin{array}
[c]{l}%
dX_{t}^{\varepsilon}+S_{t}\nabla\varphi_{\varepsilon}\left(  X_{t}%
^{\varepsilon}\right)  dt+G_{t}P_{t}^{\varepsilon}dt=f\left(  t,X_{t}%
^{\varepsilon}\right)  dt+g\left(  t,X_{t}^{\varepsilon}\right)
dW_{t},\medskip\\
dP_{t}^{\varepsilon}+G_{t}\nabla\varphi_{\varepsilon}\left(  X_{t}%
^{\varepsilon}\right)  dt+\Gamma_{t}P_{t}^{\varepsilon}dt=k_{f}~f\left(
t,X_{t}^{\varepsilon},P_{t}^{\varepsilon}\right)  dt+k_{g}~g\left(
t,X_{t}^{\varepsilon},P_{t}^{\varepsilon}\right)  dW_{t},\medskip\\
U_{t}^{\varepsilon}:=\nabla\varphi_{\varepsilon}\left(  X_{t}^{\varepsilon
}\right)  ,\quad\mathbb{P-}\text{a.e.,}\quad X_{0}^{\varepsilon}=X_{0},\text{
}P_{0}^{\varepsilon}=P_{0}.
\end{array}
\right.  \label{penalized Phase-Space system}%
\end{equation}
Keeping in mind the properties of $dK_{t}^{\varepsilon}:=U_{t}^{\varepsilon
}dt=\nabla\varphi_{\varepsilon}\left(  X_{t}^{\varepsilon}\right)  dt$, when
passing to the limit as $\varepsilon\searrow0$, the entire term will converge
in the sense of a Riemann-Stieltjes integral. Therefore, the multivalued limit
term will be considered with respect to a measure $K.$ Let us first verify
that, for every $\varepsilon>0$, the system
(\ref{penalized Phase-Space system}) admits a unique solution. Denote%
\[
Z_{t}^{\varepsilon}:=\left(
\begin{array}
[c]{c}%
X_{t}^{\varepsilon}\smallskip\\
P_{t}^{\varepsilon}%
\end{array}
\right)  \in\mathbb{R}^{2\ast2}.
\]
Then, the system (\ref{penalized Phase-Space system}) can be rewritten as:%
\[
dZ_{t}^{\varepsilon}=B_{\varepsilon}\left(  t,Z_{t}^{\varepsilon}\right)
dt+\Sigma_{\varepsilon}\left(  t,Z_{t}^{\varepsilon}\right)  dW_{t}~,
\]
where, by reminding the formula of the Yosida approximation of $\varphi,$
$A_{\varepsilon}\left(  x\right)  =\nabla\varphi_{\varepsilon}\left(
x\right)  $,%
\[
B_{\varepsilon}\left(  t,x,p\right)  :=\left(
\begin{array}
[c]{c}%
f\left(  t,x\right)  -S_{t}A_{\varepsilon}\left(  x\right)  -G_{t}%
p\smallskip\\
k_{f}f\left(  t,x\right)  -G_{t}A_{\varepsilon}\left(  x\right)  -\Gamma_{t}p
\end{array}
\right)  \quad\text{and}\quad\Sigma_{\varepsilon}\left(  t,x,p\right)
:=\left(
\begin{array}
[c]{c}%
g\left(  t,x\right) \\
k_{g}~g\left(  t,x,p\right)
\end{array}
\right)  .
\]
The Yosida penalization $A_{\varepsilon}$ is an $1/\varepsilon-$Lipschitz
univalued operator. Also, there exist the Lipschitz constants $L_{f}%
,L_{g},L_{f_{phase}},L_{g_{phase}}$ for the drivers and diffusions from the
right hand sides (RHS) of the two equations of the system. According to
(\ref{definition of the two parts}),%
\[
\max\left\{  \left\vert \left\vert S_{t}\right\vert \right\vert _{\mathrm{op}%
},\left\vert \left\vert G_{t}\right\vert \right\vert _{\mathrm{op}}\right\}
\leq\left\vert \left\vert \alpha_{t}\Theta\left(  \theta_{t}\right)
\right\vert \right\vert _{\mathrm{op}}\leq M_{T}=:\operatorname*{ess}%
\sup_{0\leq t\leq T}\left\vert \left\vert \alpha_{t}\Theta\left(  \theta
_{t}\right)  \right\vert \right\vert _{\mathrm{op}}<+\infty.\footnote{We have
$\left\vert \left\vert \alpha_{t}\Theta\left(  \theta_{t}\right)  \right\vert
\right\vert _{\mathrm{op}}=\left\vert \alpha_{t}\right\vert .$ Hence
$M_{T}=\left\vert \left\vert \alpha\right\vert \right\vert _{L^{\infty}\left(
0,T\right)  }.$}%
\]
A similar assumption is imposed to $\Gamma$, $\sup_{t\in\left[  0,T\right]
}\left\vert \left\vert \Gamma_{t}\right\vert \right\vert <+\infty$.
Consequently, for the penalized drift (by using the generic notation $\Delta
h:=h\left(  x\right)  -h\left(  y\right)  $, for an arbitrary function $h$),%
\begin{align*}
\Delta B_{\varepsilon}\left(  t\right)   &  =B_{\varepsilon}\left(
t,x,p\right)  -B_{\varepsilon}\left(  t,y,q\right) \\
&  =\left(
\begin{array}
[c]{c}%
\Delta f-S_{t}\left(  A_{\varepsilon}\left(  x\right)  -A_{\varepsilon}\left(
x\right)  \right)  -G_{t}\left(  p-q\right)  \smallskip\\
k_{f}\Delta f-G_{t}\left(  A_{\varepsilon}\left(  x\right)  -A_{\varepsilon
}\left(  x\right)  \right)  -\Gamma_{t}\left(  p-q\right)
\end{array}
\right)  ,\quad\forall t\in\left[  0,T\right]  ,\text{ }\forall p,q,x,y\in
\mathbb{R}^{2}%
\end{align*}
and we have, for all $t\in\left[  0,T\right]  $,%
\[%
\begin{array}
[c]{ccl}%
\left\vert \Delta B_{\varepsilon}\left(  t\right)  \right\vert  & \leq &
\left(  L_{f}+\dfrac{M_{T}}{\varepsilon}\right)  \left\vert x-y\right\vert
+M_{T}\left\vert p-q\right\vert +L_{f_{phase}}\left(  \left\vert
x-y\right\vert +\left\vert p-q\right\vert \right)  \medskip\\
&  & +\dfrac{M_{T}}{\varepsilon}\left\vert x-y\right\vert +\left\vert
\left\vert \Gamma\right\vert \right\vert _{\infty}\left\vert p-q\right\vert
\medskip\\
& \leq & L_{\varepsilon,T}\left(  \left\vert x-y\right\vert +\left\vert
p-q\right\vert \right)  ,\quad\text{where}\quad L_{\varepsilon,T}%
:=L_{f}+L_{f_{phase}}+\left\vert \left\vert \Gamma\right\vert \right\vert
_{\infty}+M_{T}+\dfrac{2M_{T}}{\varepsilon}.
\end{array}
\]
Analogous, the Lipschitz property of $\Sigma_{\varepsilon}$ easily follows. In
order to have the existence and uniqueness of the solution for each penalized
system (\ref{penalized Phase-Space system}), we need to have the following
condition for the initial data:%
\begin{equation}
\mathbb{E}\left(  \left\vert X_{0}\right\vert ^{2}+\left\vert P_{0}\right\vert
^{2}\right)  <+\infty. \label{boundedness for the initial data}%
\end{equation}
Condition (\ref{boundedness for the initial data}), together with global
Lipschitz properties for the RHS coefficients of
(\ref{penalized Phase-Space system}) (which give linear growth) and the
boundedness of $S,G,\Gamma$ provide a second order boundedness for $\left\vert
B_{\varepsilon}\left(  t,x,p\right)  \right\vert ^{2}$ and $\left\vert
\left\vert \Sigma_{\varepsilon}\left(  t,x,p\right)  \right\vert \right\vert
_{F}^{2}$. However, due to the nature of the reflecting problems we consider
in Section \ref{Geom compatibility}, we need stronger moments for the initial
data:%
\begin{equation}
\mathbb{E}\left(  \left\vert X_{0}\right\vert ^{4}+\left\vert P_{0}\right\vert
^{4}\right)  <+\infty.
\label{boundedness for the initial data due to problems}%
\end{equation}

In conclusion, for every $\varepsilon>0$, there exists a unique solution for
the penalized system (\ref{penalized Phase-Space system}), which is an
$\mathbb{F}$-adapted triplet $\left(  X^{\varepsilon},P^{\varepsilon
},U^{\varepsilon}\right)  ,$%
\begin{equation}
\left\{
\begin{array}
[c]{l}%
\left(  X^{\varepsilon},P^{\varepsilon}\right)  \in S_{2}^{2}\left[
0,T\right]  \times S_{2}^{2}\left[  0,T\right]  ,\quad U^{\varepsilon}\in
L^{2}\left(  \Omega\times\left[  0,T\right]  ;\mathbb{R}^{2}\right)
\medskip\\
U_{t}^{\varepsilon}=\nabla\varphi_{\varepsilon}\left(  X_{t}^{\varepsilon
}\right)  ,\quad a.e.\text{ }t\in\left[  0,T\right]  .
\end{array}
\right.  \label{space solutions for approx. system}%
\end{equation}
Denote, for simplicity,%
\[
\left\{
\begin{array}
[c]{l}%
f_{t}^{\varepsilon}:=f\left(  t,X_{t}^{\varepsilon}\right)  ,\quad
g_{t}^{\varepsilon}:=g\left(  t,X_{t}^{\varepsilon}\right)  ,\medskip\\
f_{ph,t}^{\varepsilon}:=k_{f}~f\left(  t,X_{t}^{\varepsilon},P_{t}%
^{\varepsilon}\right)  \quad\text{and}\quad g_{ph,t}^{\varepsilon}%
:=k_{g}~g\left(  t,X_{t}^{\varepsilon},P_{t}^{\varepsilon}\right)  .
\end{array}
\right.
\]
A possible choice is to consider $f_{phase}\left(  t,x,p\right)
=g_{phase}\left(  t,x,p\right)  \equiv0,$ so the second equation from
(\ref{penalized Phase-Space system}) becomes of homogenous type.

Consider the regular function%
\begin{equation}
\mathcal{H}\left(  x,p\right)  :=\varphi_{\varepsilon}\left(  x\right)
+\frac{1}{2}\left\vert p\right\vert ^{2}. \label{Combined energetic term}%
\end{equation}
In order to compute $d\mathcal{H}\left(  X_{t}^{\varepsilon},P_{t}%
^{\varepsilon}\right)  $, remind that the adjoint $S_{t}^{\ast}=S_{t},$
$G_{t}^{\ast}=-G_{t}$ and $\Gamma_{t}^{\ast}=\Gamma_{t},$ for every
$t\in\left[  0,T\right]  $. Let's write (\ref{penalized Phase-Space system})
into a more suitable form:%
\begin{equation}
\left\{
\begin{array}
[c]{l}%
dX_{t}^{\varepsilon}=\left(  f_{t}^{\varepsilon}-S_{t}U_{t}^{\varepsilon
}-G_{t}P_{t}^{\varepsilon}\right)  dt+g_{t}^{\varepsilon}dW_{t}~,\medskip\\
dP_{t}^{\varepsilon}=\left(  f_{ph,t}^{\varepsilon}-G_{t}U_{t}^{\varepsilon
}-\Gamma_{t}P_{t}^{\varepsilon}\right)  dt+g_{ph,t}^{\varepsilon}dW_{t}~.
\end{array}
\right.  \label{Phase-Space system suitable}%
\end{equation}
We derive the It\^{o}'s formula for (\ref{Combined energetic term}). We have:%
\[%
\begin{array}
[c]{ccl}%
d\mathcal{H}\left(  X_{t}^{\varepsilon},P_{t}^{\varepsilon}\right)  & = &
\left\langle \nabla\varphi_{\varepsilon}\left(  X_{t}^{\varepsilon}\right)
,dX_{t}^{\varepsilon}\right\rangle +\dfrac{1}{2}\mathrm{Tr}\left[
D^{2}\left(  \varphi_{\varepsilon}\left(  X_{t}^{\varepsilon}\right)  \right)
d\left[  X^{\varepsilon}\right]  _{t}\right]  +\left\langle P_{t}%
^{\varepsilon},dP_{t}^{\varepsilon}\right\rangle +\dfrac{1}{2}d\left[
P\right]  _{t}\medskip\\
& = & \left\langle U_{t}^{\varepsilon},f_{t}^{\varepsilon}\right\rangle
dt-\left\langle S_{t}U_{t}^{\varepsilon},U_{t}^{\varepsilon}\right\rangle
dt-\left\langle U_{t}^{\varepsilon},G_{t}P_{t}^{\varepsilon}\right\rangle
dt+\left\langle U_{t}^{\varepsilon},g_{t}^{\varepsilon}dW_{t}\right\rangle
\medskip\\
&  & +\dfrac{1}{2}\mathrm{Tr}\left[  D^{2}\left(  \varphi_{\varepsilon}\left(
X_{t}^{\varepsilon}\right)  \right)  g_{t}^{\varepsilon}\left(  g_{t}%
^{\varepsilon}\right)  ^{\ast}\right]  dt\medskip\\
&  & +\left\langle P_{t}^{\varepsilon},f_{ph,t}^{\varepsilon}\right\rangle
dt-\left\langle P_{t}^{\varepsilon},G_{t}U_{t}^{\varepsilon}\right\rangle
dt-\left\langle \Gamma_{t}P_{t}^{\varepsilon},P_{t}^{\varepsilon}\right\rangle
dt+\left\langle P_{t}^{\varepsilon},g_{ph,t}^{\varepsilon}dW_{t}\right\rangle
\medskip\\
&  & +\dfrac{1}{2}\left\vert \left\vert g_{ph,t}^{\varepsilon}\right\vert
\right\vert ^{2}dt\quad\quad\text{and, rearranging the terms,}\medskip\\
& = & -\left\langle S_{t}U_{t}^{\varepsilon},U_{t}^{\varepsilon}\right\rangle
dt-\left\langle \Gamma_{t}P_{t}^{\varepsilon},P_{t}^{\varepsilon}\right\rangle
dt+\left\langle U_{t}^{\varepsilon},f_{t}^{\varepsilon}\right\rangle
dt+\left\langle P_{t}^{\varepsilon},f_{ph,t}^{\varepsilon}\right\rangle
dt\medskip\\
&  & -\left\langle U_{t}^{\varepsilon},G_{t}P_{t}^{\varepsilon}\right\rangle
dt-\left\langle P_{t}^{\varepsilon},G_{t}U_{t}^{\varepsilon}\right\rangle
dt+\dfrac{1}{2}\mathrm{Tr}\left[  D^{2}\left(  \varphi_{\varepsilon}\left(
X_{t}^{\varepsilon}\right)  \right)  g_{t}^{\varepsilon}\left(  g_{t}%
^{\varepsilon}\right)  ^{\ast}\right]  dt+\dfrac{1}{2}\left\vert \left\vert
g_{ph,t}^{\varepsilon}\right\vert \right\vert ^{2}dt\medskip\\
&  & +\left\langle U_{t}^{\varepsilon},g_{t}^{\varepsilon}dW_{t}\right\rangle
+\left\langle P_{t}^{\varepsilon},g_{ph,t}^{\varepsilon}dW_{t}\right\rangle .
\end{array}
\]
If we focus on the gyroscopic terms,%
\[
-\left\langle U_{t}^{\varepsilon},G_{t}P_{t}^{\varepsilon}\right\rangle
-\left\langle P_{t}^{\varepsilon},G_{t}U_{t}^{\varepsilon}\right\rangle
=-\left\langle U_{t}^{\varepsilon},G_{t}P_{t}^{\varepsilon}\right\rangle
-\left\langle G_{t}^{\ast}P_{t}^{\varepsilon},U_{t}^{\varepsilon}\right\rangle
=-\left\langle U_{t}^{\varepsilon},G_{t}P_{t}^{\varepsilon}\right\rangle
+\left\langle G_{t}P_{t}^{\varepsilon},U_{t}^{\varepsilon}\right\rangle =0.
\]
Consequently, we obtain%
\[%
\begin{array}
[c]{l}%
d\mathcal{H}\left(  X_{t}^{\varepsilon},P_{t}^{\varepsilon}\right)
+\left\langle S_{t}U_{t}^{\varepsilon},U_{t}^{\varepsilon}\right\rangle
dt+\left\langle \Gamma_{t}P_{t}^{\varepsilon},P_{t}^{\varepsilon}\right\rangle
dt\medskip\\
\quad=\left\langle U_{t}^{\varepsilon},f_{t}^{\varepsilon}\right\rangle
dt+k_{f}\left\langle P_{t}^{\varepsilon},f_{t}^{\varepsilon}\right\rangle
dt+\dfrac{1}{2}\mathrm{Tr}\left[  D^{2}\left(  \varphi_{\varepsilon}\left(
X_{t}^{\varepsilon}\right)  \right)  g_{t}^{\varepsilon}\left(  g_{t}%
^{\varepsilon}\right)  ^{\ast}\right]  dt+\dfrac{k_{g}^{2}}{2}\left\vert
\left\vert g_{t}^{\varepsilon}\right\vert \right\vert _{F}^{2}dt\medskip\\
\quad+\left\langle U_{t}^{\varepsilon},g_{t}^{\varepsilon}dW_{t}\right\rangle
+k_{g}\left\langle P_{t}^{\varepsilon},g_{t}^{\varepsilon}dW_{t}\right\rangle
,
\end{array}
\]
or, under integral form,%
\begin{equation}%
\begin{array}
[c]{l}%
\varphi_{\varepsilon}\left(  X_{t}^{\varepsilon}\right)  +\dfrac{1}%
{2}\left\vert P_{t}^{\varepsilon}\right\vert ^{2}+%
{\displaystyle\int_{0}^{t}}
\left\langle S_{s}U_{s}^{\varepsilon},U_{s}^{\varepsilon}\right\rangle ds+%
{\displaystyle\int_{0}^{t}}
\left\langle \Gamma_{s}P_{s}^{\varepsilon},P_{s}^{\varepsilon}\right\rangle
ds\medskip\\
\quad=\varphi_{\varepsilon}\left(  X_{0}^{\varepsilon}\right)  +\dfrac{1}%
{2}\left\vert P_{0}^{\varepsilon}\right\vert ^{2}+%
{\displaystyle\int_{0}^{t}}
\left\langle U_{s}^{\varepsilon},f_{s}^{\varepsilon}\right\rangle ds+%
{\displaystyle\int_{0}^{t}}
k_{f}\left\langle P_{s}^{\varepsilon},f_{s}^{\varepsilon}\right\rangle
ds+\dfrac{k_{g}^{2}}{2}%
{\displaystyle\int_{0}^{t}}
\left\vert \left\vert g_{s}^{\varepsilon}\right\vert \right\vert _{F}%
^{2}ds\medskip\\
\quad+\dfrac{1}{2}%
{\displaystyle\int_{0}^{t}}
\mathrm{Tr}\left[  D^{2}\left(  \varphi_{\varepsilon}\left(  X_{s}%
^{\varepsilon}\right)  \right)  g_{s}^{\varepsilon}\left(  g_{s}^{\varepsilon
}\right)  ^{\ast}\right]  ds\medskip\\
\quad+%
{\displaystyle\int_{0}^{t}}
\left\langle U_{s}^{\varepsilon},g_{s}^{\varepsilon}dW_{s}\right\rangle +k_{g}%
{\displaystyle\int_{0}^{t}}
\left\langle P_{s}^{\varepsilon},g_{s}^{\varepsilon}dW_{s}\right\rangle ,
\end{array}
\label{Eq energy integral}%
\end{equation}

\hfill
\end{proof}

\subsection{Reflection problems with diffusion - frontier geometry
link\label{Geom compatibility}}

\begin{proof}
[Proof (continued)]The term from (\ref{Eq energy integral}) which is difficult
to control is $\mathrm{Tr}\left[  D^{2}\left(  \varphi_{\varepsilon}\left(
X_{t}^{\varepsilon}\right)  \right)  g_{t}^{\varepsilon}\left(  g_{t}%
^{\varepsilon}\right)  ^{\ast}\right]  $. This is why we consider the
situation of a reflection problem, i.e. $\varphi:\mathbb{R}^{2}\rightarrow
(-\infty,+\infty],$ $\varphi=I_{C},$ where $C\subset\mathbb{R}^{2}$ is a
nonempty, closed, convex bounded set and introduce some restrictive dependence
of the diffusion with respect to the geometry of the domain $C$. We impose
that the diffusion is tangent at the boundary and its effect over the
curvature is controlled, not eliminated.

Let us present some general classes of domains which are covered by our study
and introduce some compatibility links between their boundaries and the
diffusion coefficients.$\smallskip$

\underline{\textit{First family of domains.}}\textit{ }We consider domains $C$
featuring smooth enough frontier, more precisely, of class $C^{2}$. For
$x\in\partial C,$ denote by $n\left(  x\right)  $ or $n_{x}$ the exterior
normal in $x$ and define on the tangent space $T_{x}\left(  \partial C\right)
$ the bilinear symmetric second fundamental form $\mathbf{II}_{x}:T_{x}\left(
\partial C\right)  \times T_{x}\left(  \partial C\right)  \rightarrow
\mathbb{R},$%
\[
\mathbf{II}_{x}\left(  v,w\right)  =\left\langle \nabla_{v}n\left(  x\right)
,w\right\rangle ,
\]
where $\nabla_{v}n$ is the directional derivative of the normal field $n$ on
the direction of the tangent vector $v$. For an orthonormal base
$\{e_{i}\}_{i}$ of the tangent space, the matrix of $\mathbf{II}_{x}$ is
$\left(  \left\langle \nabla_{e_{i}}v,e_{j}\right\rangle \right)  _{i,j}$ and
$\mathbf{II}_{x}$ is a measure of the intensity of the curvature of the
boundary along the tangent direction $v$. Using the scalar product on the
tangent set, we associate to the bilinear form $\mathbf{II}_{x}$ the linear
self-adjoint shape operator $\mathbf{S}_{x}:T_{x}\left(  \partial C\right)
\rightarrow T_{x}\left(  \partial C\right)  ,$ defined by%
\[
\mathbf{II}_{x}\left(  v,w\right)  =\left\langle D_{v}n\left(  x\right)
,w\right\rangle =\left\langle \nabla_{v}n\left(  x\right)  ,w\right\rangle
=\left\langle \mathbf{S}_{x}v,w\right\rangle .
\]

For our problem, for each $x\in\partial C,$ let us define the diffusion
covariance by%
\[
a\left(  t,x,p\right)  :=g\left(  t,x,p\right)  g\left(  t,x,p\right)  ^{\ast
}.
\]
We impose the following assumptions, which hold uniformly with respect to the
rest of the involved variables,%
\begin{equation}%
\begin{array}
[c]{l}%
\left(  H_{\left(  G,\partial C\right)  }\right)  :\medskip\\
\left\{
\begin{array}
[c]{ll}%
\left(  i\right)  & g\left(  t,x,p\right)  ^{\ast}n\left(  x\right)
=0,\quad\forall x\in\partial C\quad\text{(tangency condition)}\medskip\\
\left(  ii\right)  & \left\vert \mathrm{Tr}_{T_{x}\left(  \Gamma_{r}\right)
}\left[  \mathbf{S}_{x}^{\left(  r\right)  }P_{T_{x}\Gamma_{r}}a\left(
t,x,p\right)  P_{T_{x}\Gamma_{r}}\right]  \right\vert \leq\mathcal{C}%
_{r}\mathcal{~}\mathrm{Tr}\left[  a\left(  t,x,p\right)  \right]
\quad\text{(curvature condition),}%
\end{array}
\right.
\end{array}
\label{Class C2 geom cond}%
\end{equation}
where $P_{T}\left(  y\right)  =I-n\left(  y\right)  \otimes n\left(  y\right)
$ is the projection. The first condition from (\ref{Class C2 geom cond})
states that the noise does not has a normal component on the frontier. The
second intends to control the curvature effect by the total intensity of the
diffusion. By $\mathbf{S}_{x}^{\left(  r\right)  }$ we understand the shape
operator of the $r-$parallel surface to $\partial C$, containing $x$. More
precisely,%
\[
\mathbf{S}_{x}^{\left(  r\right)  }=\mathbf{S}_{p}\left(  I+r\mathbf{S}%
_{p}\right)  ^{-1},\quad\text{with }p=\Pi_{C}\left(  x\right)  \in\partial C
\]
is the shape operator of $\Gamma_{r}$ in $x$, where, for $r>0$, $\Gamma
_{r}:=\{x=y+rn\left(  y\right)  :y\in\partial C\}$. More details on this
structural characterization will be discussed below. This is not a viability
condition of the domain since we do not impose any additional assumption on
the drift, the product $\left\langle n\left(  x\right)  ,f\left(
t,x,p\right)  \right\rangle $ accepting positive values, with the effect of
pushing the process outside the domain. This effect will be compensated by the
intervention of the reflection.$\smallskip$

\textbf{The impact on the term} $\mathrm{Tr}\left[  D^{2}\left(
\varphi_{\varepsilon}\left(  X_{t}^{\varepsilon}\right)  \right)
g_{t}^{\varepsilon}\left(  g_{t}^{\varepsilon}\right)  ^{\ast}\right]  .$

The regularity of the frontier permits us to chose an exterior tubular
$r_{0}-$neighborhood of the domain%
\[
\mathcal{U}_{r_{0}}^{ext}\left(  \partial C\right)  =\{x\notin C:dist\left(
x,\partial C\right)  \}<r_{0}.
\]
The constant $r_{0}$ is chosen such that, for every $x\in\mathcal{U}_{r_{0}%
}^{ext}\left(  \partial C\right)  ,$ there exists a unique projection
$p=p_{x}:=\Pi_{C}\left(  x\right)  \in\partial C.$ Denote also $r=r_{x}%
:=dist\left(  x,C\right)  .$ Therefore, $x=p+r~n\left(  p\right)  .$
Concerning our functions and operators, we have:%
\[
q_{C}\left(  x\right)  :=\frac{1}{2}dist^{2}\left(  x,C\right)  =\frac{1}%
{2}\left\vert x-\Pi_{C}\left(  x\right)  \right\vert ^{2},\quad\varphi
_{\varepsilon}\left(  x\right)  =\frac{1}{\varepsilon}q_{C}\left(  x\right)
=\frac{1}{2\varepsilon}dist^{2}\left(  x,C\right)
\]
and, in the exterior of $C$,%
\begin{equation}
\left\{
\begin{array}
[c]{ccl}%
\nabla q_{C}\left(  x\right)  & = & x-\Pi_{C}\left(  x\right)  =x-p=r~n\left(
p\right)  \medskip\\
\nabla\varphi_{\varepsilon}\left(  x\right)  & = & \dfrac{r}{\varepsilon
}n\left(  p\right)  =\dfrac{1}{\varepsilon}\left(  x-\Pi_{C}\left(  x\right)
\right)  =U_{\varepsilon}\left(  x\right)  .
\end{array}
\right.  \label{gradients formulas}%
\end{equation}
We first observe that $D^{2}q_{C}\left(  x\right)  =I-D\Pi_{C}\left(
x\right)  .$ Using tubular coordinates, if $x=F\left(  p,r\right)
=p+rn\left(  p\right)  ,$ then the derivative along the tangent directions is
$D_{p}F=I+r~Dn\left(  p\right)  .$ Our shape operator associated to the second
fundamental form is now $\mathbf{S}_{p}\left(  v\right)  =D_{v}n\left(
p\right)  ,$ for $v\in T_{p}\left(  \partial C\right)  .$ Consequently,%
\[
D\Pi_{C}\left(  x\right)  =\left(  I+r\mathbf{S}_{p}\right)  ^{-1}P_{T}\left(
p\right)
\]
and%
\[%
\begin{array}
[c]{ccl}%
D^{2}q_{C}\left(  x\right)  & = & I-\left(  I+r\mathbf{S}_{p}\right)
^{-1}P_{T}\left(  p\right)  \medskip\\
& = & P_{T}\left(  p\right)  +n\left(  p\right)  \otimes n\left(  p\right)
-\left(  I+r\mathbf{S}_{p}\right)  ^{-1}P_{T}\left(  p\right)  \medskip\\
& = & n\left(  p\right)  \otimes n\left(  p\right)  +\left[  I-\left(
I+r\mathbf{S}_{p}\right)  ^{-1}\right]  P_{T}\left(  p\right)  \medskip\\
& = & n\left(  p\right)  \otimes n\left(  p\right)  +r\mathbf{S}_{p}\left(
I+r\mathbf{S}_{p}\right)  ^{-1}P_{T}\left(  p\right)  .
\end{array}
\]
Hence, since $\varphi_{\varepsilon}=\frac{1}{\varepsilon}q_{C},$ we obtain,
for $x\in\mathcal{U}_{r_{0}}^{ext}\left(  \partial C\right)  ,$
$0<r=dist\left(  x,C\right)  <r_{0},$%
\begin{equation}
D^{2}\varphi_{\varepsilon}\left(  x\right)  =\dfrac{1}{\varepsilon}\left[
n\left(  p\right)  \otimes n\left(  p\right)  +r\mathbf{S}_{p}\left(
I+r\mathbf{S}_{p}\right)  ^{-1}P_{T}\left(  p\right)  \right]  .
\label{the form of D fi_eps}%
\end{equation}
For the normal component, $\tfrac{1}{\varepsilon}n\left(  p\right)  \otimes
n\left(  p\right)  $ from (\ref{the form of D fi_eps}), for every matrix
$a=g\left(  t,x,z\right)  g\left(  t,x,z\right)  ^{\ast},$%
\begin{equation}
\dfrac{1}{\varepsilon}\mathrm{Tr}\left[  \left(  n\left(  p\right)  \otimes
n\left(  p\right)  \right)  a\right]  =\dfrac{1}{\varepsilon}n\left(
p\right)  ^{\ast}g\left(  t,x,z\right)  g\left(  t,x,z\right)  ^{\ast}n\left(
p\right)  =\dfrac{1}{\varepsilon}\left\vert g\left(  t,x,z\right)  ^{\ast
}n\left(  p\right)  \right\vert ^{2}. \label{for the normal comp}%
\end{equation}
But, according to (\ref{Class C2 geom cond}-$\left(  i\right)  $), $g^{\ast
}n=0$ on $\partial C;$ its Lipschitz property in $x,$ uniformly with respect
to $t$ and $z$, leads to%
\begin{align*}
g\left(  t,x,z\right)  ^{\ast}n\left(  p\right)   &  =\left(  g\left(
t,x,z\right)  -g\left(  t,p,z\right)  \right)  ^{\ast}n\left(  p\right)
+g\left(  t,p,z\right)  ^{\ast}n\left(  p\right) \\
&  =\left(  g\left(  t,x,z\right)  -g\left(  t,p,z\right)  \right)  ^{\ast
}n\left(  p\right)  .
\end{align*}
Therefore, since $\left\vert n\left(  p\right)  \right\vert =1$,%
\[
\left\vert g\left(  t,x,z\right)  ^{\ast}n\left(  p\right)  \right\vert
\leq\left\vert \left\vert \left(  g\left(  t,x,z\right)  -g\left(
t,p,z\right)  \right)  ^{\ast}\right\vert \right\vert _{\mathrm{op}}\leq
L_{g}\left\vert x-p\right\vert =L_{g}r
\]
Geometrically, on $\partial C$, the columns of $g$ are tangent to $\partial
C$. When we drive to $x=p+r~n\left(  p\right)  ,$ the deviation of $g\left(
t,x,z\right)  $ from the tangent in $p$ is of $r-$scale. Therefore,
(\ref{for the normal comp}) yields, for $0<\varepsilon\leq1$,%
\begin{equation}
\dfrac{1}{\varepsilon}\mathrm{Tr}\left[  \left(  n\left(  p\right)  \otimes
n\left(  p\right)  \right)  gg^{\ast}\right]  \leq r\dfrac{r}{\varepsilon
}L_{g}^{2}\overset{(\ref{gradients formulas})}{=}L_{g}^{2}\varepsilon
\left\vert U_{\varepsilon}\left(  x\right)  \right\vert ^{2}\leq L_{g}%
^{2}\left\vert U_{\varepsilon}\left(  x\right)  \right\vert ^{2}
\label{for the normal comp2}%
\end{equation}

For the curved component from (\ref{the form of D fi_eps}), suppose that the
curvature $\mathbf{S}_{p}\left(  I+r\mathbf{S}_{p}\right)  ^{-1}$ is bounded
into a small enough tubular neighborhood of the domain by a positive constant
$C_{\partial C}$:%
\begin{equation}
C_{\partial C}:=\sup_{p\in\partial C,0\leq r<r_{0}}\left\vert \left\vert
\mathbf{S}_{p}\left(  I+r\mathbf{S}_{p}\right)  ^{-1}\right\vert \right\vert .
\label{boundedness of curvature}%
\end{equation}
Also, on the same neighborhood, $A:=P_{T}\left(  p\right)  a\left(
t,x,p\right)  P_{T}\left(  p\right)  \geq0$. Therefore,%
\begin{equation}%
\begin{array}
[c]{ccl}%
\left\vert \mathrm{Tr}\left[  \mathbf{S}_{p}\left(  I+r\mathbf{S}_{p}\right)
^{-1}P_{T}\left(  p\right)  a\left(  t,x,p\right)  P_{T}\left(  p\right)
\right]  \right\vert  & \leq & C_{\partial C}\mathrm{Tr}\left[  P_{T}%
aP_{T}\right]  \leq C_{\partial C}\mathrm{Tr}\left[  P_{T}a\right]  \medskip\\
& \leq & C_{\partial C}\mathrm{Tr}\left[  a\right]  =C_{\partial C}\left\vert
\left\vert g\left(  t,x,p\right)  \right\vert \right\vert _{F}^{2}~.
\end{array}
\label{bound for curvature part}%
\end{equation}
In fact, (\ref{Class C2 geom cond}-$\left(  ii\right)  $) can be avoid if we
have (\ref{boundedness of curvature}). Otherwise, (\ref{Class C2 geom cond}%
-$\left(  ii\right)  $) should be invoked, together with the link between $x$
and its projection $p$.

Summarizing, from (\ref{the form of D fi_eps}), (\ref{for the normal comp2})
and (\ref{bound for curvature part}) we obtain\footnote{The operator
$\mathbf{S}_{p}$ acts only on the tangent space. Also, $\mathbf{S}_{p}\left(
I+r\mathbf{S}_{p}\right)  ^{-1}P_{T}$ is a tangent operator. Therefore, we can
also introduce $P_{T}$ inside the trace, on the right side, without changes.}%
\begin{equation}%
\begin{array}
[c]{l}%
\mathrm{Tr}\left[  D^{2}\varphi_{\varepsilon}\left(  x\right)  a\left(
t,x,p\right)  \right]  \medskip\\
\quad\quad\quad=\dfrac{1}{\varepsilon}\mathrm{Tr}\left[  \left(  n\left(
p\right)  \otimes n\left(  p\right)  \right)  a\left(  t,x,p\right)  \right]
+\frac{r}{\varepsilon}\mathrm{Tr}\left[  \mathbf{S}_{p}\left(  I+r\mathbf{S}%
_{p}\right)  ^{-1}P_{T}\left(  p\right)  a\left(  t,x,p\right)  \right]
\medskip\\
\quad\quad\quad=\dfrac{1}{\varepsilon}\left\vert g\left(  t,x,p\right)
^{\ast}n\left(  p\right)  \right\vert ^{2}+\dfrac{r}{\varepsilon}%
\mathrm{Tr}\left[  \mathbf{S}_{p}\left(  I+r\mathbf{S}_{p}\right)  ^{-1}%
P_{T}\left(  p\right)  a\left(  t,x,p\right)  P_{T}\left(  p\right)  \right]
\medskip\\
\quad\quad\quad\leq\varepsilon L_{g}^{2}\left\vert U_{\varepsilon}\left(
x\right)  \right\vert ^{2}+C_{\partial C}\left\vert U_{\varepsilon}\left(
x\right)  \right\vert \left\vert \left\vert g\left(  t,x,p\right)  \right\vert
\right\vert _{F}^{2}\medskip\\
\quad\quad\quad\leq\left(  \varepsilon L_{g}^{2}+\delta\right)  \left\vert
U_{\varepsilon}\left(  x\right)  \right\vert ^{2}+\dfrac{C_{\partial C}^{2}%
}{4\delta}\left\vert \left\vert g\left(  t,x,p\right)  \right\vert \right\vert
_{F}^{4}~,\quad\text{for }\delta>0.
\end{array}
\label{estimates of the diff term_C2}%
\end{equation}
So, if consider fourth-order moments finite for the initial data, a linear
growth for $g$, fourth order moments for $g$ are bounded. Therefore, one can
choose $\delta>0$ such that an absorption argument involving $\left\vert
U_{\varepsilon}\left(  x\right)  \right\vert ^{2}$ from the RHS and the
coercivity of $S$ from the LHS of (\ref{Eq energy integral}) can be used.

\underline{\textit{Second family of domains.}}\textit{ }Polyhedral sets. We
mention that the additional assumptions (\ref{Class C2 geom cond}) are not
generally verified if $C$ is a polyhedral set since they are not well defined
at edges and vertexes, where $\partial C$ is not of class $C^{2}$. This is why
we construct some adapted one, which is more suitable for these type of convex
domains. Consider the polyhedral set%
\[
C:=%
{\displaystyle\bigcap_{i=1}^{N}}
\left\{  x\in\mathbb{R}^{2}:l_{i}\left(  x\right)  \leq0\right\}  ,
\]
where, for all $i$, $l_{i}\left(  x\right)  =\left\langle n_{i},x\right\rangle
-c_{i}$ and $n_{i}$ is the unitary exterior normal at the face $F_{i}=\{x\in
C:l_{i}\left(  x\right)  =0\}.$ Denote, for ever $x\in C,$%
\[
I\left(  x\right)  :=\{i=\overline{1,N}:l_{i}\left(  x\right)  =0\}
\]
the set of active faces for $x$. The exterior normal cone at $C$ in $x$ is%
\[
N_{C}\left(  x\right)  =\mathrm{cone}\{n_{i}:i\in I\left(  x\right)
\}=\left\{  \sum_{i\in I\left(  x\right)  }\lambda_{i}n_{i}:\lambda_{i}%
\geq0\right\}  .
\]

On every face $F_{i}$, the tangency condition (\ref{Class C2 geom cond}%
-$\left(  i\right)  $) becomes%
\begin{equation}
g\left(  t,x,p\right)  ^{\ast}n_{i}=0,\quad\forall x\in ris\left(
F_{i}\right)  . \label{Poly geom tang cond}%
\end{equation}
Geometrically, it means that the noise does not propagate transversely across
the face $F_{i}~.$ On the relative interior of faces,
(\ref{Class C2 geom cond}-$\left(  ii\right)  $) is trivial since the
curvature is $0$ on planes and, therefore, $\mathbf{II}_{F_{i}}=0$ on
$ris\left(  F_{i}\right)  ,$ for every $i=\overline{1,N}$. On edges and
vertexes, (\ref{Poly geom tang cond}) must take place on the set of active
faces. Geometrically, it is equivalent with the fact that the noise is tangent
at all active faces in $x$:%
\[
\mathrm{Ran}g\left(  t,x,p\right)  \subseteq%
{\displaystyle\bigcap_{i\in I\left(  x\right)  }}
n_{i}^{\bot}~.
\]
Remark that, at vertexes, this condition can prove to be very restrictive. For
example, if $C=\mathbb{R}_{+}^{2}$~, $g\left(  t,0,p\right)  ^{\ast}e_{i}=0$
for every $i,$ which gives $g\left(  t,0,p\right)  =0$. However, this is not a
complete viability condition since it does not restrict the drift.

The reflection at the frontier is made by the directions $d_{i}\in
\mathbb{R}^{2}$, one for every face and is produced by the bounded variation
increasing process $K^{i},$ $dK^{i}$ taking effect when $X_{t}\in F_{i}.$ On
edges and vertexes, the combined reflection is%
\[
dK_{t}=\sum_{i\in I\left(  x\right)  }d_{i}dK_{t}^{i}\in\mathrm{cone}%
\{n_{i}:i\in I\left(  X_{t}\right)  \}.
\]
For normal reflection on faces we ask $d_{i}=-n_{i}.$ This is not enough on
edges and vertexes, the system of active directions must generate a well posed
reflection. For an active set of indices $I$, consider the reflection matrix
$R_{I}:=\left[  d_{i}\right]  _{i\in I}$ and ask the following condition: for
every $J\subset I,$ there exists a vector $u_{j}>0$ such that
\begin{equation}
R_{J}u_{J}>0,\quad\text{component-wise.} \label{condition completely-S}%
\end{equation}
This means that the reflecting directions can be positively combined in order
to produce a vector which is strictly interior to faces
intersection.$\smallskip$

\textbf{The impact on the term} $\mathrm{Tr}\left[  D^{2}\left(
\varphi_{\varepsilon}\left(  X_{t}^{\varepsilon}\right)  \right)
g_{t}^{\varepsilon}\left(  g_{t}^{\varepsilon}\right)  ^{\ast}\right]  .$ In
this case, a penalization on faces is recommended:%
\begin{equation}
\left\{
\begin{array}
[c]{l}%
\Psi_{\varepsilon}\left(  x\right)  =\dfrac{1}{2\varepsilon}%
{\displaystyle\sum\limits_{i=1}^{N}}
\left(  l_{i}\left(  x\right)  ^{+}\right)  ^{2}\quad\text{and}\quad
U_{\varepsilon}\left(  x\right)  =\nabla\Psi_{\varepsilon}\left(  x\right)
=\dfrac{1}{\varepsilon}%
{\displaystyle\sum\limits_{i=1}^{N}}
l_{i}\left(  x\right)  ^{+}n_{i}\medskip\\
\text{For oblique reflection on faces we use }U_{\varepsilon}^{R}\left(
x\right)  =\nabla\Psi_{\varepsilon}\left(  x\right)  =\dfrac{1}{\varepsilon}%
{\displaystyle\sum\limits_{i=1}^{N}}
l_{i}\left(  x\right)  ^{+}d_{i}\medskip\\
\text{On faces:}\quad D^{2}\Psi_{\varepsilon}\left(  x\right)  =\dfrac
{1}{\varepsilon}%
{\displaystyle\sum\limits_{\{i:l_{i}\left(  x\right)  >0\}}}
n_{i}\otimes n_{i}\medskip\\
\mathrm{Tr}\left[  D^{2}\Psi_{\varepsilon}\left(  x\right)  \left(  x\right)
gg^{\ast}\right]  =\dfrac{1}{\varepsilon}%
{\displaystyle\sum\limits_{\{i:l_{i}\left(  x\right)  >0\}}}
\left\vert g\left(  t,x,p\right)  ^{\ast}n_{i}\right\vert ^{2}.
\end{array}
\right.  \label{penalization for polyhedral sets}%
\end{equation}
But, since, on each face $g\left(  t,x,p\right)  ^{\ast}n_{i}=0$ (when
$l_{i}\left(  x\right)  =0$), and $g$ is Lipschitz it follows, for $x$ close
to $F_{i}$,%
\begin{equation}
\mathrm{Tr}\left[  D^{2}\Psi_{\varepsilon}\left(  x\right)  \left(  x\right)
gg^{\ast}\right]  \leq\dfrac{1}{\varepsilon}%
{\displaystyle\sum\limits_{i}}
L_{i}^{2}\left(  l_{i}\left(  x\right)  ^{+}\right)  ^{2}\leq C_{pol}%
\varepsilon\left\vert U_{\varepsilon}\left(  x\right)  \right\vert ^{2}\leq
C_{pol}\left\vert U_{\varepsilon}\left(  x\right)  \right\vert ^{2}.
\label{estimates of the diff term_poly}%
\end{equation}
Summarizing, at vertexes, we need%
\[
\left\{
\begin{array}
[c]{ll}%
\text{for the diffusion} & g\left(  t,x,p\right)  ^{\ast}n_{i}=0,\quad\forall
i\in I\left(  x\right)  \medskip\\
\text{for the reflection} & d_{i}\cdot\left(  -n_{i}\right)  >0,\quad\forall
i\in I\left(  x\right)  \medskip\\
\text{the compatibility of the family }\left(  d_{i}\right)  _{i\in I\left(
x\right)  }: & \text{(\ref{condition completely-S}).}%
\end{array}
\right.
\]

\hfill
\end{proof}

\subsection{Apriori estimates for the approximating phase-space system}

\begin{proof}
[Proof (continued)]Working in the framework developed in Section
\ref{Geom compatibility}, we return now to the energy balance
(\ref{Eq energy integral}). The compatibilities we developed between the
diffusion and the geometry of the domain provide some estimates for the Trace
term from the energy balance: (\ref{estimates of the diff term_C2}) and
(\ref{estimates of the diff term_poly}). First, we remind that $S$ is $s_{0}%
-$coercive and $\Gamma$ is $\gamma_{0}-$coercive. Consequently if we consider
obstacle problems with $C^{2}$-class boundary, (\ref{Eq energy integral})
yields, for some Young constants $\eta_{1}$ and $\eta_{2}$,%
\[%
\begin{array}
[c]{l}%
\varphi_{\varepsilon}\left(  X_{t}^{\varepsilon}\right)  +\dfrac{1}%
{2}\left\vert P_{t}^{\varepsilon}\right\vert ^{2}+\left(  s_{0}-\eta
_{1}-\dfrac{\varepsilon L_{g}^{2}+\delta}{2}\right)
{\displaystyle\int_{0}^{t}}
\left\vert U_{s}^{\varepsilon}\right\vert ^{2}ds+\left(  \gamma_{0}-\eta
_{2}\right)
{\displaystyle\int_{0}^{t}}
\left\vert P_{s}^{\varepsilon}\right\vert ^{2}ds\medskip\\
\quad\quad\quad\quad\leq\varphi_{\varepsilon}\left(  X_{0}\right)  +\dfrac
{1}{2}\left\vert P_{0}\right\vert ^{2}+\left(  \dfrac{1}{4\eta_{1}}%
+\dfrac{k_{f}^{2}}{4\eta_{2}}\right)
{\displaystyle\int_{0}^{t}}
\left\vert f_{s}^{\varepsilon}\right\vert ^{2}ds+\dfrac{k_{g}^{2}}{2}%
{\displaystyle\int_{0}^{t}}
\left\vert \left\vert g_{s}^{\varepsilon}\right\vert \right\vert _{F}%
^{2}ds\medskip\\
\quad\quad\quad\quad+\dfrac{C_{\partial C}^{2}}{8\delta}%
{\displaystyle\int_{0}^{t}}
\left\vert \left\vert g_{s}^{\varepsilon}\right\vert \right\vert _{F}^{4}ds+%
{\displaystyle\int_{0}^{t}}
\left\langle U_{s}^{\varepsilon},g_{s}^{\varepsilon}dW_{s}\right\rangle +k_{g}%
{\displaystyle\int_{0}^{t}}
\left\langle P_{s}^{\varepsilon},g_{s}^{\varepsilon}dW_{s}\right\rangle .
\end{array}
\]
Denote, for simplicity,%
\begin{equation}
\left\{
\begin{array}
[c]{l}%
c_{U}^{\varepsilon}:=s_{0}-\eta_{1}-\dfrac{\varepsilon L_{g}^{2}+\delta}%
{2},\quad C_{P}:=\gamma_{0}-\eta_{2},\quad C_{f}:=\dfrac{1}{4\eta_{1}}%
+\dfrac{k_{f}^{2}}{4\eta_{2}},\quad C_{g,4}:=\dfrac{C_{\partial C}^{2}%
}{8\delta},\\
M_{t}^{\varepsilon}:=%
{\displaystyle\int_{0}^{t}}
\left\langle U_{s}^{\varepsilon},g_{s}^{\varepsilon}dW_{s}\right\rangle +k_{g}%
{\displaystyle\int_{0}^{t}}
\left\langle P_{s}^{\varepsilon},g_{s}^{\varepsilon}dW_{s}\right\rangle .
\end{array}
\right.  \label{Coeff notations}%
\end{equation}
The above inequality becomes%
\begin{equation}%
\begin{array}
[c]{l}%
\varphi_{\varepsilon}\left(  X_{t}^{\varepsilon}\right)  +\dfrac{1}%
{2}\left\vert P_{t}^{\varepsilon}\right\vert ^{2}+c_{U}^{\varepsilon}%
{\displaystyle\int_{0}^{t}}
\left\vert U_{s}^{\varepsilon}\right\vert ^{2}ds+C_{P}%
{\displaystyle\int_{0}^{t}}
\left\vert P_{s}^{\varepsilon}\right\vert ^{2}ds\medskip\\
\quad\quad\leq\varphi_{\varepsilon}\left(  X_{0}\right)  +\dfrac{1}%
{2}\left\vert P_{0}\right\vert ^{2}+C_{f}%
{\displaystyle\int_{0}^{t}}
\left\vert f_{s}^{\varepsilon}\right\vert ^{2}ds+\dfrac{k_{g}^{2}}{2}%
{\displaystyle\int_{0}^{t}}
\left\vert \left\vert g_{s}^{\varepsilon}\right\vert \right\vert _{F}%
^{2}ds+C_{g,4}%
{\displaystyle\int_{0}^{t}}
\left\vert \left\vert g_{s}^{\varepsilon}\right\vert \right\vert _{F}%
^{4}ds+M_{t}^{\varepsilon}.
\end{array}
\label{Eq for apriori estimates}%
\end{equation}
In order to employ some absorption arguments in
(\ref{Eq for apriori estimates}) we have to calibrate the positive constants
appearing in (\ref{Coeff notations}). Observe that, for $0<\varepsilon
\leq\varepsilon_{0}$, $c_{U}^{\varepsilon}\geq c_{U}^{\varepsilon_{0}}=:C_{U}%
$. It is mandatary that $C_{U}>0$ and $C_{P}>0$. We can replace, for an
uniformly character with respect to the boundedness constant, $c_{U}%
^{\varepsilon}$ by $C_{U}$ in (\ref{Eq for apriori estimates}). Also, for
$X_{0}\in C$, $\varphi_{\varepsilon}\left(  X_{0}\right)  =0$.

In the same manner, if we work with polyhedral sets, we obtain similar
estimates, but the absorption condition becomes now:%
\[
s_{0}-\eta_{1}-\frac{C_{pol}}{2}>0.
\]
The boundedness $\left(  H_{f,g}\right)  $ of $f$ and $g$ on $C$ implies%
\[
\left\{
\begin{array}
[c]{l}%
{\displaystyle\int_{0}^{t}}
\left\vert f_{s}^{\varepsilon}\right\vert ^{2}ds\leq%
{\displaystyle\int_{0}^{t}}
\left(  f^{\#}\left(  s\right)  \right)  ^{2}ds,\quad%
{\displaystyle\int_{0}^{t}}
\left\vert \left\vert g_{s}^{\varepsilon}\right\vert \right\vert _{F}%
^{4}ds\leq%
{\displaystyle\int_{0}^{t}}
\left(  g^{\#}\left(  s\right)  \right)  ^{4}ds,\medskip\\%
{\displaystyle\int_{0}^{t}}
\left\vert \left\vert g_{s}^{\varepsilon}\right\vert \right\vert _{F}%
^{2}ds\leq T^{1/2}\left(
{\displaystyle\int_{0}^{T}}
\left(  g^{\#}\left(  s\right)  \right)  ^{4}ds\right)  ^{1/2}.
\end{array}
\right.
\]
Therefore, there exists an $\varepsilon-$independent $C_{T}=C_{T}\left(
C_{f},k_{g}^{2},C_{g,4}\right)  >0$ such that%
\[
\varphi_{\varepsilon}\left(  X_{t}^{\varepsilon}\right)  +\dfrac{1}%
{2}\left\vert P_{t}^{\varepsilon}\right\vert ^{2}+C_{U}%
{\displaystyle\int_{0}^{t}}
\left\vert U_{s}^{\varepsilon}\right\vert ^{2}ds+C_{P}%
{\displaystyle\int_{0}^{t}}
\left\vert P_{s}^{\varepsilon}\right\vert ^{2}ds\leq\dfrac{1}{2}\left\vert
P_{0}\right\vert ^{2}+C_{T}+M_{t}^{\varepsilon}.
\]
The Burkholder--Davis--Gundy inequality applied to $M_{r}^{\varepsilon},$
gives us, for every $t$,%
\begin{equation}%
\begin{array}
[c]{ccl}%
\mathbb{E}\left(  \sup\limits_{r\in\left[  0,t\right]  }\left\vert
M_{r}^{\varepsilon}\right\vert \right)  & \leq & \sqrt{3}\mathbb{E}\left[
\left(
{\displaystyle\int_{0}^{t}}
\left\vert \left(  g_{s}^{\varepsilon}\right)  ^{\ast}U_{s}^{\varepsilon
}\right\vert ^{2}ds\right)  ^{1/2}\right]  +\sqrt{3}\left\vert k_{g}%
\right\vert \mathbb{E}\left[  \left(
{\displaystyle\int_{0}^{t}}
\left\vert \left(  g_{s}^{\varepsilon}\right)  ^{\ast}P_{s}^{\varepsilon
}\right\vert ^{2}ds\right)  ^{1/2}\right]  \medskip\\
& \leq & \sqrt{3}\eta_{3}\mathbb{E}\left(
{\displaystyle\int_{0}^{t}}
\left\vert U_{s}^{\varepsilon}\right\vert ^{2}ds\right)  \mathbb{+}%
\dfrac{\sqrt{3}}{4\eta_{3}}\mathbb{E}\left(
{\displaystyle\int_{0}^{T}}
\left(  g^{\#}\left(  s\right)  \right)  ^{2}ds\right)  \medskip\\
& + & \sqrt{3}\left\vert k_{g}\right\vert \eta_{4}\mathbb{E}\left(
\sup\limits_{r\in\left[  0,t\right]  }\left\vert P_{r}^{\varepsilon
}\right\vert ^{2}\right)  +\dfrac{\sqrt{3}\left\vert k_{g}\right\vert }%
{4\eta_{4}}\mathbb{E}\left(
{\displaystyle\int_{0}^{t}}
\left\vert \left\vert g_{s}^{\varepsilon}\right\vert \right\vert _{F}%
^{2}ds\right)  \medskip\\
& \leq & \eta_{U}\mathbb{E}\left(
{\displaystyle\int_{0}^{t}}
\left\vert U_{s}^{\varepsilon}\right\vert ^{2}ds\right)  +\eta_{P}%
\mathbb{E}\left(  \sup\limits_{r\in\left[  0,t\right]  }\left\vert
P_{r}^{\varepsilon}\right\vert ^{2}\right)  +C_{T,f,g},
\end{array}
\label{BDG estimate}%
\end{equation}
where $\eta_{U},\eta_{P}>0$ can be controlled to suitable small values.

We write (\ref{Eq for apriori estimates}) on the time interval $\left[
0,r\right]  $ and use the positivity of the terms from the LHS to isolate the
only quantity we want to control. Take $\sup_{0\leq r\leq t}$ and apply
$\mathbb{E}$ for each quantity we analyze. We obtain:%
\[%
\begin{array}
[c]{l}%
\dfrac{1}{2}\mathbb{E}\left(  \sup\limits_{0\leq r\leq t}\left\vert
P_{r}^{\varepsilon}\right\vert ^{2}\right)  \medskip\\
\quad\quad\leq\dfrac{\mathbb{E}\left\vert P_{0}\right\vert ^{2}}{2}%
+C_{f}\mathbb{E}\left(
{\displaystyle\int_{0}^{t}}
\left\vert f_{s}^{\varepsilon}\right\vert ^{2}ds\right)  +\dfrac{k_{g}^{2}}%
{2}\mathbb{E}\left(
{\displaystyle\int_{0}^{t}}
\left\vert \left\vert g_{s}^{\varepsilon}\right\vert \right\vert _{F}%
^{2}ds\right)  +C_{g,4}\mathbb{E}\left(
{\displaystyle\int_{0}^{t}}
\left\vert \left\vert g_{s}^{\varepsilon}\right\vert \right\vert _{F}%
^{4}ds\right)  +\mathbb{E}\left(  \sup\limits_{r\in\left[  0,t\right]
}\left\vert M_{r}^{\varepsilon}\right\vert \right)  \medskip\\
\quad\quad\leq\dfrac{\mathbb{E}\left\vert P_{0}\right\vert ^{2}}{2}+\eta
_{U}\mathbb{E}\left(
{\displaystyle\int_{0}^{t}}
\left\vert U_{s}^{\varepsilon}\right\vert ^{2}ds\right)  +\eta_{P}%
\mathbb{E}\left(  \sup\limits_{r\in\left[  0,t\right]  }\left\vert
P_{r}^{\varepsilon}\right\vert ^{2}\right)  +C_{T,f,g}%
\end{array}
\]
Reasoning in exactly the same manner for the term with $\varphi_{\varepsilon}%
$, we obtain, by adding the estimates,%
\begin{equation}
\mathbb{E}\left(  \sup\limits_{0\leq r\leq t}\varphi_{\varepsilon}\left(
X_{r}^{\varepsilon}\right)  \right)  +\left(  \dfrac{1}{2}-2\eta_{P}\right)
\mathbb{E}\left(  \sup\limits_{0\leq r\leq t}\left\vert P_{r}^{\varepsilon
}\right\vert ^{2}\right)  \leq\mathbb{E}\left\vert P_{0}\right\vert ^{2}%
+2\eta_{U}\mathbb{E}\left(
{\displaystyle\int_{0}^{t}}
\left\vert U_{s}^{\varepsilon}\right\vert ^{2}ds\right)  +2C_{T,f,g}.
\label{prebound of Efieps,EPeps}%
\end{equation}
The choice of $\eta_{P}<1/4$ is given by the absorption argument for
$\mathbb{E}\left(  \sup\nolimits_{r\in\left[  0,t\right]  }\left\vert
P_{r}^{\varepsilon}\right\vert ^{2}\right)  $. From the same equation
(\ref{Eq for apriori estimates}), isolate in the LHS only the integral with
respect to $\left\vert U_{s}^{\varepsilon}\right\vert ^{2}$, apply the
expectation and we get:%
\[%
\begin{array}
[c]{ccl}%
\mathbb{E}\left(
{\displaystyle\int_{0}^{t}}
\left\vert U_{s}^{\varepsilon}\right\vert ^{2}ds\right)  & \leq &
\dfrac{\mathbb{E}\left\vert P_{0}\right\vert ^{2}}{2C_{U}}+\dfrac{C_{f}}%
{C_{U}}\mathbb{E}\left(
{\displaystyle\int_{0}^{t}}
\left\vert f_{s}^{\varepsilon}\right\vert ^{2}ds\right)  +\dfrac{k_{g}^{2}%
}{2C_{U}}\mathbb{E}\left(
{\displaystyle\int_{0}^{t}}
\left\vert \left\vert g_{s}^{\varepsilon}\right\vert \right\vert _{F}%
^{2}ds\right)  +\dfrac{C_{g,4}}{C_{U}}\left(  \mathbb{E}%
{\displaystyle\int_{0}^{t}}
\left\vert \left\vert g_{s}^{\varepsilon}\right\vert \right\vert _{F}%
^{4}ds\right)  \medskip\\
& \leq & \dfrac{\mathbb{E}\left\vert P_{0}\right\vert ^{2}}{2C_{U}}%
+\dfrac{C_{f}}{C_{U}}%
{\displaystyle\int_{0}^{t}}
\left(  f^{\#}\left(  s\right)  \right)  ^{2}ds+\dfrac{k_{g}^{2}}{2C_{U}%
}t^{1/2}\left(
{\displaystyle\int_{0}^{t}}
\left(  g^{\#}\left(  s\right)  \right)  ^{4}ds\right)  ^{1/2}\medskip\\
& + & \dfrac{C_{g,4}}{C_{U}}%
{\displaystyle\int_{0}^{t}}
\left(  g^{\#}\left(  s\right)  \right)  ^{4}ds.
\end{array}
\]
since $f$ and $g$ are bounded on the bounded convex set $C$. The hypothesis on
$P_{0},f$ and $g$ assure the existence of a positive, $\varepsilon
-$independent, constant $C_{1,T}$ such that%
\begin{equation}
\fbox{$\sup\limits_{0<\varepsilon\leq\varepsilon_{0}}\mathbb{E}\left(
{\displaystyle\int_{0}^{T}}
\left\vert U_{s}^{\varepsilon}\right\vert ^{2}ds\right)  <\mathcal{C}_{1,T}$,}
\label{bound of EintUeps2}%
\end{equation}
which is a uniform estimation of $U^{\varepsilon}$ in $L^{2}\left(
\Omega\times\left[  0,T\right]  \right)  $. It follows, since $K_{t}%
^{\varepsilon}=%
{\textstyle\int_{0}^{t}}
\nabla\varphi_{\varepsilon}\left(  X_{s}^{\varepsilon}\right)  ds=%
{\textstyle\int_{0}^{t}}
U_{s}^{\varepsilon}ds$,%
\[
\fbox{$\mathbb{E}\left\updownarrow K^{\varepsilon}\right\updownarrow _{T}\leq
T\mathbb{E}\left(
{\displaystyle\int_{0}^{T}}
\left\vert U_{s}^{\varepsilon}\right\vert ^{2}ds\right)  <T\mathcal{C}_{1,T}%
$,\quad$\forall0<\varepsilon\leq\varepsilon_{0}$~.}%
\]
Obviously, inserting (\ref{bound of EintUeps2}) into
(\ref{prebound of Efieps,EPeps}), we obtain, for all $\varepsilon$,%
\begin{equation}
\fbox{$\mathbb{E}\left(  \sup\limits_{0\leq t\leq T}\varphi_{\varepsilon
}\left(  X_{r}^{\varepsilon}\right)  \right)  +\mathbb{E}\left(
\sup\limits_{0\leq t\leq T}\left\vert P_{r}^{\varepsilon}\right\vert
^{2}\right)  \leq\mathcal{C}_{2,T}$} \label{bound of Efieps,EPeps}%
\end{equation}
Finally, from the first approximated equation of
(\ref{penalized Phase-Space system}) it easily follows:%
\begin{equation}
\fbox{$\sup\limits_{0<\varepsilon\leq\varepsilon_{0}}\mathbb{E}\left(
\sup\limits_{0\leq t\leq T}\left\vert X_{t}^{\varepsilon}\right\vert
^{2}\right)  <\mathcal{C}_{3,T}$.} \label{bound of Xeps}%
\end{equation}

\hfill
\end{proof}

\begin{remark}
For polyhedral sets we obtain the same type of energetic estimate, with the
mention that we replace the Moreau-Yosida penalization based on the distance
with the penalization on faces. We use $\Psi_{\varepsilon}\left(  x\right)
=\tfrac{1}{2\varepsilon}%
{\textstyle\sum\nolimits_{i=1}^{N}}
\left(  l_{i}\left(  x\right)  ^{+}\right)  ^{2}$ introduced in
(\ref{penalization for polyhedral sets}) and the technique is similar. We only
need to impose a non-degeneracy condition of active norms: for a point $x\in
C$, denote $J\left(  x\right)  :=\{i=\overline{1,N}:l_{i}\left(  x\right)
>0\}.$ The set $J$ aims locally, to the points' neighborhood. We assume there
exists $k_{C}>0$ such that
\[
k_{C}\sum_{i\in J\left(  x\right)  }a_{i}^{2}\leq\left\vert \sum_{i\in
J\left(  x\right)  }a_{i}n_{i}\right\vert ^{2},
\]
for all the coefficients $a_{i}\geq0$ which appear simultaneous in the active
region. More precisely, it can be rigorously formalized as follows: for any
possible set $J$ of active faces of a given point, there exists $k_{J}>0$ such
that:%
\begin{equation}
k_{J}\sum_{i\in J}a_{i}^{2}\leq\left\vert \sum_{i\in J}a_{i}n_{i}\right\vert
^{2},\quad a_{i}\geq0, \label{constraint for poly sets}%
\end{equation}
and then take $k_{C}:=\min_{J}k_{J}~,$ value which exists since there is a
finite number of faces and subsets $J$. Usually, in a face neighborhood,
$\mathrm{card}\left(  J\right)  =1,$ in a neighborhood of an edge or vertex,
$J$ can contain more incident faces, so $\mathrm{card}\left(  J\right)  \geq2$.

The above non-degeneracy condition (\ref{constraint for poly sets}) is useful
for controlling the Hessian term via $\left\vert U^{\varepsilon}\right\vert
^{2}$. It is easy verified by a broad range of domains, like simplicial
polyhedra (polyhedra with all faces simplexes).
\end{remark}

\subsection{Tightness via the Aldous' criterion}

\begin{proof}
[Proof (continued)]We first work in the Skorokhod space $\mathcal{D}\left(
\left[  0,T\right]  ;\mathbb{R}^{3\ast2}\right)  $, endowed with the $J_{1}%
$-topology. The maximal increment estimate obtained below, combined with
compact containment, yields $C-$tightness of the family $\{\mathcal{L}\left(
Z^{\varepsilon}\right)  \}_{\varepsilon>0}=\{\mathcal{L}\left(  X^{\varepsilon
},P^{\varepsilon},K^{\varepsilon}\right)  \}_{\varepsilon>0}.$ Consequently,
every weak limit point is concentrated on $C\left(  \left[  0,T\right]
;\mathbb{R}^{3\ast2}\right)  $. Based on (\ref{bound of EintUeps2}),
(\ref{bound of Efieps,EPeps}) and (\ref{bound of Xeps}) we show that we can
apply Aldous tightness criterion for the sequence $Z^{\varepsilon}:=\left(
X^{\varepsilon},P^{\varepsilon},K^{\varepsilon}\right)  .$ The criterion
operates with uniformly estimates of the increments upon stopping times.
Usually, the classical use of this tightness criterion is in the framework of
the space $\mathcal{D}\left(  \left[  0,T\right]  ;\mathbb{R}^{3\ast2}\right)
$ of c\`{a}dl\`{a}g processes, equipped with the Skorokhod topology (see
Billingsley \cite[p.176]{Billingsley:99}) and it provides $\mathcal{D}%
-$tightness. However, the estimates obtained for $Z^{\varepsilon}$, followed
by the continuity of its component limits in%
\[
C(\left[  0,T\right]  ;\mathbb{R}^{3\ast2})\subset\mathcal{D}(\left[
0,T\right]  ;\mathbb{R}^{3\ast2})
\]
can lead to $C-$tightness. Aldous, in its original form from $\mathcal{D}$
requires, for every $\eta>0$,%
\begin{equation}
\lim_{\delta\searrow0}\sup_{0<\varepsilon\leq\varepsilon_{0}}\sup_{\tau\leq
T}\sup_{0\leq\theta\leq\delta}\mathbb{P}\left(  \left\vert Z_{\left(
\tau+\theta\right)  \wedge T}^{\varepsilon}-Z_{\tau}^{\varepsilon}\right\vert
_{\left(  \mathbb{R}^{2}\right)  ^{3}}>\eta\right)  =0,
\label{initial Aldous criterion}%
\end{equation}
where $\tau\in\mathcal{T}_{\left[  0,T\right]  }$ is any stopping time. In
order to prove (\ref{initial Aldous criterion}) we will invoke the Markov's inequality.

Let we recall the phase-space system%
\[
\left\{
\begin{array}
[c]{l}%
dX_{t}^{\varepsilon}=\left(  f\left(  t,X_{t}^{\varepsilon}\right)
-S_{t}U_{t}^{\varepsilon}-G_{t}P_{t}^{\varepsilon}\right)  dt+g\left(
t,X_{t}^{\varepsilon}\right)  dW_{t},\medskip\\
dP_{t}^{\varepsilon}=\left(  f_{phase}\left(  t,X_{t}^{\varepsilon}%
,P_{t}^{\varepsilon}\right)  -G_{t}U_{t}^{\varepsilon}-\Gamma_{t}%
P_{t}^{\varepsilon}\right)  dt+g_{phase}\left(  t,X_{t}^{\varepsilon}%
,P_{t}^{\varepsilon}\right)  dW_{t},\medskip\\
U_{t}^{\varepsilon}:=\nabla\varphi_{\varepsilon}\left(  X_{t}^{\varepsilon
}\right)  ,\quad\mathbb{P-}\text{a.e.,}\quad K_{t}^{\varepsilon}=%
{\displaystyle\int_{0}^{t}}
U_{s}^{\varepsilon}ds.
\end{array}
\right.
\]
According to the hypothesis on $S,G,\Gamma,f,g,f_{phase}$ and $g_{phase},$ the
estimates (\ref{bound of EintUeps2}), (\ref{bound of Efieps,EPeps}) and
(\ref{bound of Xeps}) lead to the existence of some $\varepsilon-$independent
positive constants $C_{T,}$ which can vary from one line to another, such that%
\[
\left\{
\begin{array}
[c]{l}%
\sup\limits_{0<\varepsilon\leq\varepsilon_{0}}\left(  \mathbb{E}%
{\displaystyle\int_{0}^{T}}
\left\vert f\left(  t,X_{t}^{\varepsilon}\right)  \right\vert ^{2}%
dt+\mathbb{E}%
{\displaystyle\int_{0}^{T}}
\left\vert |g\left(  t,X_{t}^{\varepsilon}\right)  \right\vert |_{F}%
^{2}dt\right)  \leq C_{T}~,\medskip\\
\sup\limits_{0<\varepsilon\leq\varepsilon_{0}}\left(  \mathbb{E}%
{\displaystyle\int_{0}^{T}}
\left\vert f_{phase}\left(  t,X_{t}^{\varepsilon},P_{t}^{\varepsilon}\right)
\right\vert ^{2}dt+\mathbb{E}%
{\displaystyle\int_{0}^{T}}
\left\vert |g_{phase}\left(  t,X_{t}^{\varepsilon},P_{t}^{\varepsilon}\right)
\right\vert |_{F}^{2}dt\right)  \leq C_{T}~.
\end{array}
\right.
\]
By denoting%
\[
\left\{
\begin{array}
[c]{l}%
b_{t}^{\varepsilon}:=f\left(  t,X_{t}^{\varepsilon}\right)  -S_{t}%
U_{t}^{\varepsilon}-G_{t}P_{t}^{\varepsilon}~,\medskip\\
b_{phase,t}^{\varepsilon}:=f_{phase}\left(  t,X_{t}^{\varepsilon}%
,P_{t}^{\varepsilon}\right)  -G_{t}U_{t}^{\varepsilon}-\Gamma_{t}%
P_{t}^{\varepsilon}\,\text{,}%
\end{array}
\right.
\]
one obtain, through the already obtained estimates,%
\[
\sup\limits_{0<\varepsilon\leq\varepsilon_{0}}\left(  \mathbb{E}%
{\displaystyle\int_{0}^{T}}
\left\vert b_{t}^{\varepsilon}\right\vert ^{2}dt+\mathbb{E}%
{\displaystyle\int_{0}^{T}}
\left\vert b_{phase,t}^{\varepsilon}\right\vert ^{2}dt\right)  \leq C_{T}%
\]
We focus now on Aldous condition, in its maximal form. Take an arbitrary
chosen stopping time $\tau\in\mathcal{T}_{\left[  0,T\right]  }$,
$0<\delta\leq T$ and denote%
\[
\sigma_{r}:=\left(  \tau+r\right)  \wedge T,\quad\quad0\leq r\leq\delta.
\]
We intend to prove a stronger Aldous' condition, which implies
(\ref{initial Aldous criterion}). More precisely, we first prove that%
\[
\sup_{0<\varepsilon\leq\varepsilon_{0}}\sup_{\tau\leq T}\mathbb{E}\left(
\sup_{0\leq r\leq\delta}\left\vert Z_{\sigma_{r}}^{\varepsilon}-Z_{\tau
}^{\varepsilon}\right\vert ^{2}\right)  \leq C_{T}\delta.
\]

\noindent For $X^{\varepsilon}:$%
\[
X_{\sigma_{r}}^{\varepsilon}-X_{\tau}^{\varepsilon}=\int_{\tau}^{\sigma_{r}%
}b_{s}^{\varepsilon}ds+\int_{\tau}^{\sigma_{r}}g_{s}^{\varepsilon}%
dW_{s}=:A_{r}^{\varepsilon}+M_{r}^{\varepsilon}\,,
\]
where%
\[
\left\{
\begin{array}
[c]{l}%
\sup\limits_{0\leq r\leq\delta}\left\vert A_{r}^{\varepsilon}\right\vert
^{2}=\sup\limits_{0\leq r\leq\delta}\left\vert
{\displaystyle\int_{\tau}^{\sigma_{r}}}
b_{s}^{\varepsilon}ds\right\vert ^{2}\leq\sup\limits_{0\leq r\leq\delta
}\left(  \left(  \sigma_{r}-\tau\right)
{\displaystyle\int_{\tau}^{\sigma_{r}}}
\left\vert b_{s}^{\varepsilon}\right\vert ^{2}ds\right)  \leq\delta%
{\displaystyle\int_{0}^{T}}
\left\vert b_{s}^{\varepsilon}\right\vert ^{2}ds\medskip\\
\sup\limits_{0<\varepsilon\leq\varepsilon_{0}}\sup\limits_{\tau\leq
T}\mathbb{E}\left(  \sup\limits_{0\leq r\leq\delta}\left\vert A_{r}%
^{\varepsilon}\right\vert ^{2}\right)  \leq C_{T}\delta,\medskip\\
\mathbb{E}\left(  \sup\limits_{0\leq r\leq\delta}\left\vert M_{r}%
^{\varepsilon}\right\vert ^{2}\right)  \overset{BDG~inequality}{\leq
}C\mathbb{E}%
{\displaystyle\int_{\tau}^{\left(  \tau+\delta\right)  \wedge T}}
\left\vert \left\vert g_{s}^{\varepsilon}\right\vert \right\vert _{F}%
^{2}ds\leq C_{T}\delta.
\end{array}
\right.
\]
Consequently,%
\[
\mathbb{E}\left(  \sup\limits_{0\leq r\leq\delta}\left\vert X_{\sigma_{r}%
}^{\varepsilon}-X_{\tau}^{\varepsilon}\right\vert ^{2}\right)  \leq
2\mathbb{E}\left(  \sup\limits_{0\leq r\leq\delta}\left\vert A_{r}%
^{\varepsilon}\right\vert ^{2}\right)  +2\mathbb{E}\left(  \sup\limits_{0\leq
r\leq\delta}\left\vert M_{r}^{\varepsilon}\right\vert ^{2}\right)  \leq
C_{T}\delta.
\]

\noindent For $P^{\varepsilon}:\quad$Start we the phase equation and, exactly
with the same arguments as above, we obtain:%
\[
\sup\limits_{0<\varepsilon\leq\varepsilon_{0}}\sup\limits_{\tau\leq
T}\mathbb{E}\left(  \sup\limits_{0\leq r\leq\delta}\left\vert P_{\sigma_{r}%
}^{\varepsilon}-P_{\tau}^{\varepsilon}\right\vert ^{2}\right)  \leq
C_{T}\delta.
\]

\noindent For $K^{\varepsilon}:\quad$We have, by the Cauchy-Schwarz
inequality,%
\[
\sup\limits_{0<\varepsilon\leq\varepsilon_{0}}\sup\limits_{\tau\leq
T}\mathbb{E}\left(  \sup\limits_{0\leq r\leq\delta}\left\vert K_{\sigma_{r}%
}^{\varepsilon}-K_{\tau}^{\varepsilon}\right\vert ^{2}\right)  \leq\delta
\sup\limits_{0<\varepsilon\leq\varepsilon_{0}}\sup\limits_{\tau\leq
T}\mathbb{E}%
{\displaystyle\int_{0}^{T}}
\left\vert U_{s}^{\varepsilon}\right\vert ^{2}ds\leq C_{T}\delta.
\]
Putting together the last three estimates, we get%
\begin{equation}
\sup\limits_{0<\varepsilon\leq\varepsilon_{0}}\sup\limits_{\tau\leq
T}\mathbb{E}\left(  \sup_{0\leq r\leq\delta}\left\vert Z_{\left(
\tau+r\right)  \wedge T}^{\varepsilon}-Z_{\tau}^{\varepsilon}\right\vert
^{2}\right)  \leq C_{T}\delta. \label{boundedness of Zeps}%
\end{equation}
We use now the Markov inequality, but first we have to make an intermediary
step in order to apply it correctly. Denote $Y_{\varepsilon,\tau,\delta}%
:=\sup_{0\leq r\leq\delta}\left\vert Z_{\left(  \tau+r\right)  \wedge
T}^{\varepsilon}-Z_{\tau}^{\varepsilon}\right\vert ^{2}$. Since
$Y_{\varepsilon,\tau,\delta}\geq0,$ $\mathbb{P-}$a.e.,%
\[
\left\{  \sup_{0\leq r\leq\delta}\left\vert Z_{\left(  \tau+r\right)  \wedge
T}^{\varepsilon}-Z_{\tau}^{\varepsilon}\right\vert >\eta\right\}  =\left\{
Y_{\varepsilon,\tau,\delta}>\eta^{2}\right\}  .
\]
By the Markov inequality we obtain%
\[
\mathbb{P}\left(  \sup_{0\leq r\leq\delta}\left\vert Z_{\left(  \tau+r\right)
\wedge T}^{\varepsilon}-Z_{\tau}^{\varepsilon}\right\vert >\eta\right)
=\mathbb{P}\left(  Y_{\varepsilon,\tau,\delta}>\eta^{2}\right)  \leq
\frac{\mathbb{E}Y_{\varepsilon,\tau,\delta}}{\eta^{2}}\leq\frac{C_{T}\delta
}{\eta^{2}}.
\]
As consequence, we obtain the desired maximal form of Aldous' condition:%
\begin{equation}
\lim_{\delta\searrow0}\sup_{0<\varepsilon\leq\varepsilon_{0}}\sup_{\tau\leq
T}\mathbb{P}\left(  \sup_{0\leq\theta\leq\delta}\left\vert Z_{\left(
\tau+\theta\right)  \wedge T}^{\varepsilon}-Z_{\tau}^{\varepsilon}\right\vert
>\eta\right)  =0, \label{Aldous maximal}%
\end{equation}
which is stronger that the initial form (\ref{initial Aldous criterion}). This
stronger condition is exactly the quantity that appears when we want to
exclude aggressive oscillations on small time intervals, fact that allows us
to derive $C-$tightness, i.e. tightness with continuous limits. This maximal
form is the suitable one to exclude limit jumps (see Aldous \cite{Aldous:89}).
In order to observe the condition for two near moments, let $0\leq r_{1}%
<r_{2}\leq\delta$. Hence%
\[
\left\vert Z_{\left(  \tau+r_{2}\right)  \wedge T}^{\varepsilon}-Z_{\left(
\tau+r_{1}\right)  \wedge T}^{\varepsilon}\right\vert \leq\left\vert
Z_{\left(  \tau+r_{2}\right)  \wedge T}^{\varepsilon}-Z_{\tau}^{\varepsilon
}\right\vert +\left\vert Z_{\left(  \tau+r_{1}\right)  \wedge T}^{\varepsilon
}-Z_{\tau}^{\varepsilon}\right\vert \leq2\sup_{0\leq r\leq\delta}\left\vert
Z_{\left(  \tau+r\right)  \wedge T}^{\varepsilon}-Z_{\tau}^{\varepsilon
}\right\vert
\]
and (\ref{Aldous maximal}) also implies%
\[
\lim_{\delta\searrow0}\sup_{0<\varepsilon\leq\varepsilon_{0}}\sup_{\tau\leq
T}\mathbb{P}\left(  \sup_{0\leq r_{1},r_{2}\leq\delta}\left\vert Z_{\left(
\tau+r_{2}\right)  \wedge T}^{\varepsilon}-Z_{\left(  \tau+r_{1}\right)
\wedge T}^{\varepsilon}\right\vert >\eta\right)  =0.
\]
To conclude the Aldous' tightness criterion, we have to verify the compact
containment. The energy estimates assure
\[
\mathbb{E}\left\updownarrow K^{\varepsilon}\right\updownarrow _{T}%
^{2}=\mathbb{E}\left(
{\displaystyle\int_{0}^{T}}
\left\vert U_{s}^{\varepsilon}\right\vert ds\right)  ^{2}\leq T\mathbb{E}%
\left(
{\displaystyle\int_{0}^{T}}
\left\vert U_{s}^{\varepsilon}\right\vert ^{2}ds\right)  \leq C_{T}.
\]
Together with the estimates of $X^{\varepsilon}$ and $P^{\varepsilon}$ we
obtain $\sup_{0<\varepsilon\leq\varepsilon_{0}}\mathbb{E}\left\vert \left\vert
Z^{\varepsilon}\right\vert \right\vert _{\infty}^{2}\leq C_{T}$ and the Markov
inequality gives%
\[
\sup_{0<\varepsilon\leq\varepsilon_{0}}\mathbb{P}\left(  \sup_{0\leq t\leq
T}\left\vert Z_{t}^{\varepsilon}\right\vert >R\right)  =\sup_{0<\varepsilon
\leq\varepsilon_{0}}\mathbb{P}\left(  \left\vert \left\vert Z^{\varepsilon
}\right\vert \right\vert _{\infty}>R\right)  \leq\frac{C_{T}}{R^{2}%
}\rightarrow0,\quad\text{as}\quad R\rightarrow+\infty\text{.}%
\]
Consequently, the maximal Aldous estimate, together with the above compact
containment estimate, yields%
\[
\left\{  \mathcal{L}\left(  Z^{\varepsilon}\right)  \right\}  _{\varepsilon
>0}=\left\{  \mathcal{L}\left(  X^{\varepsilon},P^{\varepsilon},K^{\varepsilon
}\right)  \right\}  _{\varepsilon>0}\quad\text{is}\quad C-\text{tight in
}\mathcal{D}(\left[  0,T\right]  ;\mathbb{R}^{3\ast2})\text{.}%
\]
Since the approximating processes have continuous trajectories, every weak
limit point in the Skorokhod topology is concentrated on $C(\left[
0,T\right]  ;\mathbb{R}^{3\ast2})$, hence the sequence is tight when regarded
as a family of laws on the continuous-path space $C(\left[  0,T\right]
;\mathbb{R}^{3\ast2})$. Moreover, the total variations of the family
$\{K^{\varepsilon}\}_{\varepsilon}$ are uniformly bounded in $L^{2}$ and we
can apply the same tightness arguments for $V^{\varepsilon}:=\left\updownarrow
K^{\varepsilon}\right\updownarrow $. Since the Wiener component has a fixed
Wiener law, by the Prohorov theorem, there exists a subsequence such that, as
$\varepsilon_{n}\rightarrow0$,
\[
\left(  X^{\varepsilon_{n}},P^{\varepsilon_{n}},K^{\varepsilon_{n}%
},V^{\varepsilon_{n}},W\right)  \rightarrow\left(  X,P,K,V,W\right)
,\quad\text{in law on }C\left(  \left[  0,T\right]  ;\mathbb{R}^{3\ast
2+1+k}\right)  .
\]
By the Skorokhod representation theorem, we can choose a probability space
$\left(  \bar{\Omega},\mathcal{\bar{F}},\mathbb{\bar{P}}\right)  $ and some
random vectors defined on $\left(  \bar{\Omega},\mathcal{\bar{F}}%
,\mathbb{\bar{P}}\right)  $,%
\[
\left\{
\begin{array}
[c]{l}%
\left(  \bar{X}^{\varepsilon_{n}},\bar{P}^{\varepsilon_{n}},\bar
{K}^{\varepsilon_{n}},\bar{V}^{\varepsilon_{n}},\bar{W}^{\varepsilon_{n}%
}\right)  \overset{\mathcal{L}}{=}\left(  X^{\varepsilon_{n}},P^{\varepsilon
_{n}},K^{\varepsilon_{n}},V^{\varepsilon_{n}},W\right)  \medskip\\
\left(  \bar{X},\bar{P},\bar{K},\bar{V},\bar{W}\right)  \overset{\mathcal{L}%
}{=}\left(  X,P,K,V,W\right)
\end{array}
\right.
\]
such that $\left(  \bar{X}^{\varepsilon_{n}},\bar{P}^{\varepsilon_{n}},\bar
{K}^{\varepsilon_{n}},\bar{V}^{\varepsilon_{n}},\bar{W}^{\varepsilon_{n}%
}\right)  \overset{\mathbb{P}-a.s.}{\rightarrow}\left(  \bar{X},\bar{P}%
,\bar{K},\bar{V},\bar{W}\right)  .$

Moreover, according to Pardoux, R\u{a}\c{s}canu \cite[Proposition
2.15.]{Pardoux/Rascanu:14}, $(\bar{W}^{\varepsilon_{n}},\mathbb{F}^{\bar
{X}^{\varepsilon_{n}},\bar{P}^{\varepsilon_{n}},\bar{K}^{\varepsilon_{n}}%
,\bar{V}^{\varepsilon_{n}},\bar{W}^{\varepsilon_{n}}})_{n}$ and $(\bar
{W},\mathbb{F}^{\bar{X},\bar{P},\bar{K},\bar{V},\bar{W}})$ are $\mathbb{R}%
^{k}-$Wiener processes. Let us summarize that, along a further subsequence, we
may assume almost sure convergence in $\mathcal{D}(\left[  0,T\right]
;\mathbb{R}^{3\ast2+1+k})$. Since the limit is continuous, this convergence is
uniformly on $\left[  0,T\right]  .$ The lower semicontinuity of the total
variation gives $\left\updownarrow \bar{K}\right\updownarrow _{T}\leq
\liminf_{n\rightarrow+\infty}\bar{V}_{T}^{\varepsilon_{n}}=\bar{V}_{T},$
$\mathbb{\bar{P}-}$a.e.. According to the Fatou's Lemma,%
\[
\mathbb{\bar{E}}\left\updownarrow \bar{K}\right\updownarrow _{T}^{2}%
\leq\mathbb{\bar{E}}\bar{V}_{T}^{2}\overset{Fatou}{\leq}\liminf_{n\rightarrow
+\infty}\mathbb{\bar{E}}\left(  \bar{V}_{T}^{\varepsilon_{n}}\right)
^{2}=\liminf_{n\rightarrow+\infty}\mathbb{\bar{E}}\left\updownarrow \bar
{K}^{\varepsilon_{n}}\right\updownarrow _{T}^{2}<+\infty.
\]
Thus, along a suitable sequence, there exists the limit $\bar{Z}:=\left(
\bar{X},\bar{P},\bar{K}\right)  $ of the penalizing sequence of "copies" given
by the Skorokhod representation $\bar{Z}^{\varepsilon}=\left(  \bar
{X}^{\varepsilon},\bar{P}^{\varepsilon},\bar{K}^{\varepsilon}\right)
_{\varepsilon}$ such that%
\[
\left\{
\begin{array}
[c]{l}%
\left(  \bar{X}_{\left(  \cdot\right)  },\bar{P}_{\left(  \cdot\right)
}\right)  \in C\left(  \left[  0,T\right]  ;\mathbb{R}^{2\ast2}\right)
,\text{ }\mathbb{P-}\text{a.s.}\quad\text{and}\quad\mathbb{\bar{E}}\left(
\sup\nolimits_{t\in\left[  0,T\right]  }\left\vert \bar{X}_{t}\right\vert
^{2}+\sup\nolimits_{t\in\left[  0,T\right]  }\left\vert \bar{P}_{t}\right\vert
^{2}\right)  <+\infty,\quad i.e.\medskip\\
\left(  \bar{X},\bar{P}\right)  \in S_{2\ast2}^{2}\left[  0,T\right]
,\medskip\\
\bar{K}_{\left(  \cdot\right)  }\in C\left(  \left[  0,T\right]
;\mathbb{R}^{2}\right)  \cap BV\left(  \left[  0,T\right]  ;\mathbb{R}%
^{2}\right)  ,\quad\mathbb{\bar{P}-}\text{a.s.}\quad\text{and}\quad
\mathbb{\bar{E}}\left\updownarrow \bar{K}\right\updownarrow _{T}^{2}<+\infty.
\end{array}
\right.
\]

\hfill
\end{proof}

\subsection{Identification of a solution}

\begin{proof}
[Proof (continued)]The last milestone of our proof is to show that the triplet
$\left(  \bar{X},\bar{P},\bar{K}\right)  $ is a solution in the sense of
Definition \ref{Definition of Solution}. On the Skorokhod probability space,
the almost sure convergence of $\left(  \bar{X}^{\varepsilon_{n}},\bar
{P}^{\varepsilon_{n}},\bar{K}^{\varepsilon_{n}}\right)  $ to $\left(  \bar
{X},\bar{P},\bar{K}\right)  $, together with the continuity of the
coefficients and the boundedness of $S,G,\Gamma$, allows us to pass to the
limit in all integrals with deterministic integrators, of Riemann or Stieltjes
type. The convergence of the stochastic integrals follows from the martingale
characterization and the fact that $\bar{W}$ is a Wiener process with respect
to the natural filtration of the limit. The subdifferential condition is
obtained by passing to the limit in the Yosida approximation inequality. We
present below the main arguments for our present statements.

Let us first remind the fact that, in the previous section, we obtained the
almost sure convergence of $\left(  \bar{X}^{\varepsilon_{n}},\bar
{P}^{\varepsilon_{n}},\bar{K}^{\varepsilon_{n}}\right)  $ to $\left(  \bar
{X},\bar{P},\bar{K}\right)  $ in $\mathcal{D}(\left[  0,T\right]
;\mathbb{R}^{3\ast2})$. However, if we have this type of convergence, with
respect to the $J_{1}$-topology on $\mathcal{D}(\left[  0,T\right]
;\mathbb{R}^{3\ast2})$ and the limits are continuous, we have, in fact uniform
almost sure convergence:%
\begin{equation}
\sup_{0\leq t\leq T}\left(  \left\vert \bar{X}_{t}^{\varepsilon_{n}}-\bar
{X}_{t}\right\vert +\left\vert \bar{P}_{t}^{\varepsilon_{n}}-\bar{P}%
_{t}\right\vert \right)  \rightarrow0,\quad\text{as}\quad\varepsilon
_{n}\searrow0. \label{convergence of Xeps, Peps}%
\end{equation}
Now, since $f,g,f_{phase},g_{phase}$ are Carath\'{e}odory functions, yields,
for every $t$, $\mathbb{\bar{P}-}$a.s.%
\[
\left\{
\begin{array}
[c]{l}%
\left(  f\left(  s,\bar{X}_{s}^{\varepsilon_{n}}\right)  ,g\left(  s,\bar
{X}_{s}^{\varepsilon_{n}}\right)  \right)  \rightarrow\left(  f\left(
s,\bar{X}_{s}\right)  ,g\left(  s,\bar{X}_{s}\right)  \right)  ,\medskip\\
\left(  f_{phase}\left(  s,\bar{X}_{s}^{\varepsilon_{n}},\bar{P}%
^{\varepsilon_{n}}\right)  ,g_{phase}\left(  s,\bar{X}_{s}^{\varepsilon_{n}%
},\bar{P}^{\varepsilon_{n}}\right)  \right)  \rightarrow\left(  f_{phase}%
\left(  s,\bar{X}_{s},\bar{P}_{s}\right)  ,g_{phase}\left(  s,\bar{X}_{s}%
,\bar{P}_{s}\right)  \right)  .
\end{array}
\right.
\]
Also, using $\left(  H_{f,g}\right)  $ we obtain, for $x_{0}\in Dom\left(
\varphi\right)  $,%
\[
\left\vert f\left(  s,\bar{X}_{s}^{\varepsilon_{n}}\right)  \right\vert
^{2}\leq2|f^{\#}\left(  s\right)  |^{2}+2\mu^{2}\left(  s\right)  \left\vert
\bar{X}_{s}^{\varepsilon_{n}}-x_{0}\right\vert ^{2},\quad\mathbb{\bar{P}%
-}\text{a.s}%
\]
and%
\[
\sup_{n}\mathbb{\bar{E}}%
{\displaystyle\int_{0}^{T}}
\left\vert f\left(  s,\bar{X}_{s}^{\varepsilon_{n}}\right)  \right\vert
^{2}ds\leq2%
{\displaystyle\int_{0}^{T}}
|f^{\#}\left(  s\right)  |^{2}ds+2L^{2}T\mathcal{C}_{3,T}<+\infty.
\]
This justify the possibility of passing to limit via Lebesgue dominated
convergence theorem:%
\[%
{\displaystyle\int_{0}^{t}}
f\left(  s,\bar{X}_{s}^{\varepsilon_{n}}\right)  ds\rightarrow%
{\displaystyle\int_{0}^{t}}
f\left(  s,\bar{X}_{s}\right)  ds,\quad\text{as }\varepsilon_{n}%
\searrow0\text{,}\quad\mathbb{\bar{P}-}\text{a.s.}%
\]
The same boundedness argument applies to $g,f_{phase}$ and $g_{phase}$.

For the convergence of the integral with the integrand $G_{s}P_{s}%
^{\varepsilon}:$%
\[
\left\{
\begin{array}
[c]{l}%
\left\vert
{\displaystyle\int_{0}^{t}}
G_{s}\bar{P}_{s}^{\varepsilon_{n}}ds-%
{\displaystyle\int_{0}^{t}}
G_{s}\bar{P}_{s}ds\right\vert \leq%
{\displaystyle\int_{0}^{t}}
\left\vert G_{s}\right\vert \left\vert \bar{P}_{s}^{\varepsilon_{n}}-\bar
{P}_{s}\right\vert ds\leq\sup\limits_{0\leq t\leq T}\left\vert G_{t}%
\right\vert
{\displaystyle\int_{0}^{T}}
\left\vert \bar{P}_{s}^{\varepsilon_{n}}-\bar{P}_{s}\right\vert ds\medskip\\
\quad\quad\leq\left\vert \left\vert G\right\vert \right\vert _{\infty}%
{\displaystyle\int_{0}^{T}}
\sup\limits_{0\leq t\leq T}\left\vert \bar{P}_{t}^{\varepsilon_{n}}-\bar
{P}_{t}\right\vert dt\leq\left\vert \left\vert G\right\vert \right\vert
_{\infty}T\sup\limits_{0\leq t\leq T}\left\vert \bar{P}_{t}^{\varepsilon_{n}%
}-\bar{P}_{t}\right\vert \rightarrow0,\quad\text{as}\quad\varepsilon
_{n}\searrow0.\medskip\\
\left\vert
{\displaystyle\int_{0}^{t}}
G_{s}\bar{P}_{s}^{\varepsilon_{n}}ds\right\vert ^{2}\leq\left(
{\displaystyle\int_{0}^{t}}
\left\vert G_{s}\right\vert \left\vert \bar{P}_{s}^{\varepsilon_{n}%
}\right\vert ds\right)  ^{2}\leq t%
{\displaystyle\int_{0}^{t}}
\left\vert G_{s}\right\vert ^{2}\left\vert \bar{P}_{s}^{\varepsilon_{n}%
}\right\vert ^{2}ds\leq T\left\vert \left\vert G\right\vert \right\vert
_{\infty}^{2}%
{\displaystyle\int_{0}^{T}}
\left\vert \bar{P}_{s}^{\varepsilon_{n}}\right\vert ^{2}ds.\medskip\\
\text{So:}\quad\mathbb{\bar{E}}\left(  \left\vert
{\displaystyle\int_{0}^{t}}
G_{s}\bar{P}_{s}^{\varepsilon_{n}}ds\right\vert ^{2}\right)  \leq T\left\vert
\left\vert G\right\vert \right\vert _{\infty}^{2}\sup\limits_{n}%
\mathbb{\bar{E}}\left(  \sup\limits_{0\leq s\leq T}\left\vert \bar{P}%
_{s}^{\varepsilon_{n}}\right\vert ^{2}\right)  <+\infty.
\end{array}
\right.
\]
Hence, the family is uniformly integrable and the almost surely convergence
(with respect to $t$) implies convergence in $L^{1}\left(  \Omega\right)  $.
Exactly the same argument leads us to%
\[%
{\displaystyle\int_{0}^{t}}
\Gamma_{s}\bar{P}_{s}^{\varepsilon_{n}}ds\rightarrow%
{\displaystyle\int_{0}^{t}}
\Gamma_{s}\bar{P}_{s}ds,\quad\text{as }\varepsilon_{n}\searrow0\text{,}%
\quad\text{uniformly w.r.t. }t\in\left[  0,T\right]  ,\quad\mathbb{\bar{P}%
-}\text{a.s.}%
\]

For the convergence of the sequence of Riemann-Stieltjes integrals,%
\[%
{\displaystyle\int_{0}^{t}}
S_{s}U_{s}^{\varepsilon_{n}}ds=%
{\displaystyle\int_{0}^{t}}
S_{s}d\bar{K}_{s}^{\varepsilon_{n}}\rightarrow%
{\displaystyle\int_{0}^{t}}
S_{s}d\bar{K}_{s},\quad\text{as }\varepsilon_{n}\searrow0\text{,}%
\quad\text{where}\quad\bar{K}_{t}^{\varepsilon_{n}}=%
{\displaystyle\int_{0}^{t}}
U_{s}^{\varepsilon_{n}}ds,
\]
we just have to make us of the Helly-Bray theorem, which permits the passing
to limit in Riemann-Stieltjes integrals (see Pardoux, R\u{a}\c{s}canu
\cite[Proposition 6.16, p. 534]{Pardoux/Rascanu:14}). We can do this since
$\bar{K}^{\varepsilon_{n}}\rightarrow\bar{K}$, as $\varepsilon_{n}%
\rightarrow0$, unif. a.s., $\sup_{n}\mathbb{E}\left\updownarrow \bar
{K}^{\varepsilon_{n}}\right\updownarrow _{T}<+\infty$ and $S$ is continuous
and bounded. The same argument is used for the convergence of $%
{\textstyle\int_{0}^{t}}
G_{s}d\bar{K}_{s}^{\varepsilon_{n}}$ to $%
{\textstyle\int_{0}^{t}}
G_{s}d\bar{K}_{s}~$.

For the convergence of the stochastic integrals, we have to prove, uniformly
w.r.t. $t\in\left[  0,T\right]  $,%
\[
M_{t}^{\varepsilon_{n}}:=%
{\displaystyle\int_{0}^{t}}
g\left(  s,\bar{X}_{s}^{\varepsilon_{n}}\right)  d\bar{W}_{s}^{\varepsilon
_{n}}\rightarrow%
{\displaystyle\int_{0}^{t}}
g\left(  s,\bar{X}_{s}\right)  d\bar{W}_{s}=:M_{t}~,\quad\text{as }%
\varepsilon_{n}\searrow0\text{.}%
\]
The interested reader is invited to consult also Pardoux, R\u{a}\c{s}canu
\cite[Corollary 2.14. and Proposition 2.15.]{Pardoux/Rascanu:14}. For every
$n\in\mathbb{N}^{\ast}$, $M^{\varepsilon_{n}}$ is a continuous martingale with
respect to the sequence filtration $\mathbb{F}^{\bar{X}^{\varepsilon_{n}}%
,\bar{P}^{\varepsilon_{n}},\bar{K}^{\varepsilon_{n}},\bar{V}^{\varepsilon_{n}%
},\bar{W}^{\varepsilon_{n}}}$. Moreover,%
\[
\left\langle M^{\varepsilon_{n}}\right\rangle _{t}=%
{\displaystyle\int_{0}^{t}}
g\left(  s,\bar{X}_{s}^{\varepsilon_{n}}\right)  g\left(  s,\bar{X}%
_{s}^{\varepsilon_{n}}\right)  ^{\ast}ds\quad\text{and}\quad\left\langle
M^{\varepsilon_{n}},\bar{W}^{\varepsilon_{n}}\right\rangle _{t}=%
{\displaystyle\int_{0}^{t}}
g\left(  s,\bar{X}_{s}^{\varepsilon_{n}}\right)  ds.
\]
From the convergence (\ref{convergence of Xeps, Peps}), the fact that $g$ is
continuous and the uniform estimates holds,%
\[
\sup_{0\leq t\leq T}\left\vert \left\langle M^{\varepsilon_{n}}\right\rangle
_{t}-%
{\displaystyle\int_{0}^{t}}
g\left(  s,\bar{X}_{s}\right)  g\left(  s,\bar{X}_{s}\right)  ^{\ast
}ds\right\vert \rightarrow0,\quad\text{as }\varepsilon_{n}\searrow
0,\quad\mathbb{\bar{P}-}\text{a.s.}%
\]
and, for every $t\in\left[  0,T\right]  $,%
\[
\left\langle M^{\varepsilon_{n}},\bar{W}^{\varepsilon_{n}}\right\rangle
_{t}\rightarrow%
{\displaystyle\int_{0}^{t}}
g\left(  s,\bar{X}_{s}\right)  ds\quad\text{as }\varepsilon_{n}\searrow
0,\quad\mathbb{\bar{P}-}\text{a.s.}%
\]
So, the limit process $M$ has the quadratic variation%
\begin{equation}
\left\langle M\right\rangle _{t}=%
{\displaystyle\int_{0}^{t}}
g\left(  s,\bar{X}_{s}\right)  g\left(  s,\bar{X}_{s}\right)  ^{\ast}%
ds,\quad\mathbb{\bar{P}-}\text{a.s.} \label{quadratic variation of M}%
\end{equation}
and, according to the result of martingale stability from Gassous,
R\u{a}\c{s}canu, Rotenstein \cite[Appendix A.4, Proposition 17 and Lemma
19]{Gassous/Rascanu/Rotenstein:12}, $M$ is a martingale with respect to the
natural filtration of the limit, $\mathbb{F}^{\bar{X},\bar{P},\bar{K},\bar
{V},\bar{W}}$. Moreover,%
\[
\left\langle M,\bar{W}\right\rangle _{t}=%
{\displaystyle\int_{0}^{t}}
g\left(  s,X_{s}\right)  ds.
\]
As consequence, from the form of its quadratic variation
(\ref{quadratic variation of M}) and from the fact that $\bar{W}$ is an
$\mathbb{F}^{\bar{X},\bar{P},\bar{K},\bar{V},\bar{W}}-$Wiener process, we
deduce the identification:%
\[
M_{t}=%
{\displaystyle\int_{0}^{t}}
g\left(  s,\bar{X}_{s}\right)  d\bar{W}_{s}~.
\]
The same arguments will assure the convergence of the It\^{o} integrals
featuring the integrand $g_{phase}$~.$\smallskip$

We now have all the ingredients which allow to pass to the limit, as
$\varepsilon_{n}\searrow0$, in the Phase-Space approximating system
(\ref{Phase-Space system suitable}). We get:%
\[
\left\{
\begin{array}
[c]{l}%
\bar{X}_{t}+%
{\displaystyle\int_{0}^{t}}
S_{s}d\bar{K}_{s}+%
{\displaystyle\int_{0}^{t}}
G_{s}\bar{P}_{s}ds=\bar{X}_{0}+%
{\displaystyle\int_{0}^{t}}
f\left(  s,\bar{X}_{s}\right)  ds+%
{\displaystyle\int_{0}^{t}}
g\left(  s,\bar{X}_{s}\right)  d\bar{W}_{s},\medskip\\
\bar{P}_{t}+%
{\displaystyle\int_{0}^{t}}
G_{s}d\bar{K}_{s}+%
{\displaystyle\int_{0}^{t}}
\Gamma_{s}\bar{P}_{s}ds=\bar{P}_{0}+%
{\displaystyle\int_{0}^{t}}
f_{phase}\left(  s,\bar{X}_{s},\bar{P}_{s}\right)  ds+%
{\displaystyle\int_{0}^{t}}
g_{phase}\left(  s,\bar{X}_{s},\bar{P}_{s}\right)  d\bar{W}_{s}~.
\end{array}
\right.
\]
To conclude the proof, we have to identify the subdifferential inclusion, i.e.
we need to obtain $d\bar{K}_{t}\in\partial\varphi\left(  \bar{X}_{t}\right)
\left(  dt\right)  $, for all $t$. We are forced to work with measures and the
integral form of the subdifferential condition for the subdifferential
operator since, on the original space, $U_{t}^{\varepsilon_{n}}=\nabla
\varphi_{\varepsilon_{n}}\left(  X_{t}^{\varepsilon_{n}}\right)  ,$ for all
$t$, but passing into the Skorokhod space, $U_{t}^{\varepsilon_{n}}$ is lost.
By the Skorokhod construction, $\mathcal{L}\left(  K^{\varepsilon_{n}}\right)
=\mathcal{L}\left(  \bar{K}^{\varepsilon_{n}}\right)  $, so, as a copy of
$K^{\varepsilon_{n}}$ - which satisfy an integral type subdifferential
inclusion on the original space - $\bar{K}^{\varepsilon_{n}}$ inherits the
inequality. More precisely, for all $0\leq s\leq t\leq T$:%
\[%
{\displaystyle\int_{s}^{t}}
\varphi\left(  \bar{X}_{r}^{e_{n}}\right)  dr\leq%
{\displaystyle\int_{s}^{t}}
\varphi\left(  y_{r}\right)  dr-%
{\displaystyle\int_{s}^{t}}
\left\langle y_{r}-\bar{X}_{r}^{e_{n}},d\bar{K}_{r}^{\varepsilon_{n}%
}\right\rangle \quad\forall y\in C(\left[  0,T\right]  ;\mathbb{R}^{2}%
),\quad\mathbb{\bar{P}-}\text{a.s.}%
\]
According to Pardoux, R\u{a}\c{s}canu \cite[Corollary 1.18., Proposition
1.19.]{Pardoux/Rascanu:14}%
\[%
{\displaystyle\int_{s}^{t}}
\varphi\left(  \bar{X}_{r}\right)  dr\leq%
{\displaystyle\int_{s}^{t}}
\varphi\left(  y_{r}\right)  dr-%
{\displaystyle\int_{s}^{t}}
\left\langle y_{r}-\bar{X}_{r},d\bar{K}_{r}\right\rangle \quad\forall y\in
C(\left[  0,T\right]  ;\mathbb{R}^{2}),\quad\mathbb{\bar{P}-}\text{a.s.,}%
\]
which implies (see Pardoux, R\u{a}\c{s}canu \cite[Lemma 4.31.]%
{Pardoux/Rascanu:14}) $d\bar{K}_{r}\in\partial\varphi\left(  \bar{X}%
_{r}\right)  \left(  dr\right)  $, for all $r\in\left[  0,T\right]  $.\medskip

As consequence, $(\bar{\Omega},\mathcal{\bar{F}},\mathbb{\bar{P}}%
,\mathbb{F}^{\bar{X},\bar{P},\bar{K},\bar{W}},\bar{W},\bar{X},\bar{P},\bar
{K})$ is a weak-strong solution for the Stochastic Variational Inequality with
oblique reflection (\ref{Phase-Space ssytem}). The existence result is now
complete.\hfill
\end{proof}

\section{On the uniqueness of the solution}

The present construction yields a weak-strong existence result for the lifted
phase-space system. Since the solution is obtained after a compactness
argument and a Skorokhod representation, the theorem does not place arbitrary
solutions on a common stochastic basis or under a common Wiener process.
Consequently, several standard notions of "uniqueness" must be distinguished
carefully: pathwise uniqueness, uniqueness in law independent on the filtered
probability space, solution producing a minimum energy.

The usual \textit{pathwise uniqueness} property cannot be considered at that
level by deriving directly, from the present existence theorem. This type of
uniqueness, together with a \textit{weak existence}, usually assures, via
Ikeda, Watanabe \cite{Ikeda/Watanabe:81} or, more suitable for our framework,
Kurtz \cite{Kurtz:07}, \cite{Kurtz:13}, the existence of a strong solution.
However, for the arguments to hold, one must prove, prior to the existence
result, the pathwise uniqueness for two solutions, synchronized on the same
filtered probability space, with the same initial condition and driven by the
same Wiener process, without making any reference to the existence result.
Such an analysis requires a phase-space energy compatible both with the
monotonicity of the subdifferential term and with the gyroscopic cancellation
induced by the skew-symmetric component. Once pathwise uniqueness is
available, uniqueness in law and strong existence may be approached through a
Yamada--Watanabe-type principle for stochastic differential inclusions with
bounded-variation reflection. The phase-space lifting suggests that uniqueness
should be investigated through a relative energy on the extended state space.
Such an energy must preserve the cancellation induced by the skew-symmetric
coupling and reduce the bounded-variation contribution to the monotonicity
pairing associated with the subdifferential operator.$\smallskip$

Remind the form of the scaled rotation matrix:%
\[
\alpha_{t}\Theta\left(  \theta_{t}\right)  =\alpha_{t}\cos\theta_{t}%
I_{2}+\alpha_{t}\sin\theta_{t}J,\quad\text{where}\quad I_{2}=\left(
\begin{array}
[c]{cc}%
1 & 0\\
0 & 1
\end{array}
\right)  ,\quad J=\left(
\begin{array}
[c]{cc}%
0 & -1\\
1 & 0
\end{array}
\right)  .
\]
Denote $s_{t}:=\alpha_{t}\cos\theta_{t}>0$ and $g_{t}:=\alpha_{t}\sin
\theta_{t}$. Therefore, $S_{t}=s_{t}I_{2}$ and $G_{t}=g_{t}J.$ Obviously,
$J^{T}=-J$ and $J^{2}=-I_{2}~.$

\begin{theorem}
Under the above assumptions, pathwise uniqueness holds.
\end{theorem}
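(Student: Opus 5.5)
The plan is to work with two solutions $(X^{1},P^{1},K^{1})$ and $(X^{2},P^{2},K^{2})$ of (\ref{Phase-Space ssytem}). Both are defined on the same filtered probability space, driven by the same Wiener process $W$, and start from the same data $(x_{0},p_{0})$. Write $\Delta X:=X^{1}-X^{2}$, $\Delta P:=P^{1}-P^{2}$, $\Delta K:=K^{1}-K^{2}$, and use $\Delta f,\Delta g,\Delta f_{phase},\Delta g_{phase}$ for the corresponding differences of coefficients.

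The naive energy $|\Delta X|^{2}+|\Delta P|^{2}$ behaves well in the $X$-equation. There the reflection contributes $-2s_{t}\langle\Delta X_{t},d\Delta K_{t}\rangle$. Since $s_{t}>0$ is scalar, condition $(4)$ of Definition \ref{Definition of Solution} gives the measure-level monotonicity $\langle X^{1}_{t}-X^{2}_{t},dK^{1}_{t}-dK^{2}_{t}\rangle\geq0$. This follows by testing the integral form of the subdifferential inequality with $y=X^{2}$ for $K^{1}$, with $y=X^{1}$ for $K^{2}$, and adding. Hence this term has a good sign. The phase equation, however, produces $-2g_{t}\langle\Delta P_{t},J\,d\Delta K_{t}\rangle$. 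This term has no sign and is integrated against a BV measure, so Gronwall cannot control it.

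The key step is to remove it by a gyroscopic change of variables on the extended state space. Exploiting $S_{t}=s_{t}I_{2}$ and $G_{t}=g_{t}J$, I set
\[
\rho_{t}:=\frac{g_{t}}{s_{t}},\qquad Y_{t}:=\Delta P_{t}-\rho_{t}J\Delta X_{t}.
\]
In $dY_{t}$, the reflection contributions are $-g_{t}J\,d\Delta K_{t}$ from $d\Delta P_{t}$ and $-\rho_{t}J(-s_{t}\,d\Delta K_{t})=+g_{t}J\,d\Delta K_{t}$ from $-\rho_{t}J\,d\Delta X_{t}$. They cancel exactly. Because $(\alpha,\theta)\in C^{1}_{b}$ and $s_{t}\geq s_{0}>0$, $\rho$ is $C^{1}$ with bounded derivative. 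So $dY_{t}$ contains only the bounded drift $-\dot\rho_{t}J\Delta X_{t}\,dt$, the linear terms $\rho_{t}g_{t}J^{2}\Delta P_{t}\,dt=-\rho_{t}g_{t}\Delta P_{t}\,dt$ and $-\Gamma_{t}\Delta P_{t}\,dt$, the Lipschitz differences $\Delta f_{phase}-\rho_{t}J\Delta f$, and the martingale part $(\Delta g_{phase}-\rho_{t}J\Delta g)\,dW_{t}$. Since $\Delta P=Y+\rho J\Delta X$ and $\rho$ is bounded, $|\Delta X|+|\Delta P|$ and $|\Delta X|+|Y|$ are equivalent norms.

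I then apply Itô's formula to $\mathcal{E}_{t}:=|\Delta X_{t}|^{2}+|Y_{t}|^{2}$. The reflection appears only through $-2s_{t}\langle\Delta X_{t},d\Delta K_{t}\rangle\leq0$, which I drop. The remaining drift and Itô-correction terms are bounded by $\lambda(t)\,\mathcal{E}_{t}\,dt$, with
\[
\lambda:=C\bigl(1+\mu+\ell^{2}+L_{f_{phase}}+L_{g_{phase}}^{2}\bigr)\in L^{1}(0,T),
\]
using the Lipschitz hypotheses (\ref{conv_ob-h7}), their analogues for $f_{phase},g_{phase}$ in $(x,p)$, and the boundedness of $S,G,\Gamma,\rho,\dot\rho$. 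I localize with $\tau_{N}:=\inf\{t:|X^{1}_{t}|+|X^{2}_{t}|+|P^{1}_{t}|+|P^{2}_{t}|\geq N\}$ so that the stochastic integral is a true martingale. The integrability $\mathbb{E}\sup|X|^{2}+\sup|P|^{2}<\infty$ from Theorem \ref{main result} makes $\tau_{N}\uparrow T$. I take expectations in the $e^{-\int_{0}^{t}\lambda}$-weighted form of $\mathcal{E}$, obtain $\mathbb{E}\,\mathcal{E}_{t\wedge\tau_{N}}=0$, and let $N\to\infty$. Continuity of paths then gives $\Delta X\equiv0$ and $Y\equiv0$, hence $\Delta P\equiv0$, up to indistinguishability. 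Finally, the first equation yields $\int_{0}^{t}s_{r}\,d\Delta K_{r}=0$ for all $t$. Since $s\geq s_{0}>0$ is continuous, integrating $1/s$ against this measure gives $\Delta K\equiv0$.

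The main obstacle I expect is justifying the cancellation rigorously. Itô's product rule for $\rho_{t}J\Delta X_{t}$, with $\Delta X$ a semimartingale containing a continuous BV part, must produce exactly $\rho_{t}J\,d\Delta X_{t}+\dot\rho_{t}J\Delta X_{t}\,dt$, which requires $\rho\in C^{1}$, i.e.\ the $C^{1}_{b}$ assumption (\ref{hyp on teta}) together with $\cos\theta_{t}$ bounded away from $0$. A second point to check is that the measure-level monotonicity $\langle\Delta X,d\Delta K\rangle\geq0$ holds for $\varphi$ a general proper convex l.s.c.\ function and not only for $I_{C}$. I would verify this via the integral characterization in the remark following Definition \ref{Definition of Solution}, applied on arbitrary subintervals $[s,t]$.
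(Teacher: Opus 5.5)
Your proof is correct and is essentially the paper's argument. With $Y_t=\Delta P_t-\frac{g_t}{s_t}J\Delta X_t$, the paper's relative energy (\ref{Relative energy}) is exactly $\mathcal{E}_t=\frac{1}{2}s_t|Y_t|^2+\frac{\lambda_t}{2s_t}|\Delta X_t|^2$, with $\lambda_t$ from (\ref{lbl}). So your change of variables is the completed square behind $Q_t$: both proofs cancel the $G_t\,d\Delta K_t$ contribution the same way and leave only a term $-c_t\langle\Delta X_t,d\Delta K_t\rangle$ with $c_t>0$.

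The differences are technical and favour your version. The paper tunes $\rho_t$ to make $c_t$ a constant $\lambda$, because it only uses $\int_0^t\langle\Delta X_s,d\Delta K_s\rangle\geq0$. Your measure-level monotonicity handles the weight $2s_t$ directly, and it does follow from the integral characterization on every $[s,t]$, using $\varphi(X^i_\cdot)\in L^1$. Your stopping times $\tau_N$ and $L^1$ Gronwall weight avoid two things the paper's BDG/absorption step uses implicitly: integrability of $\sup_t\mathcal{E}_t$ for arbitrary solutions, and constant Lipschitz bounds. Note that $\tau_N$ reaches $T$ by path continuity alone, so you should not cite Theorem \ref{main result} there; it concerns only the constructed solution.
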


\begin{proof}
Consider two solutions defined on the same stochastic basis, equipped with the
same Wiener process, $W:$%
\[
\left(  X^{1},P^{1},K^{1}\right)  \quad\quad\text{and}\quad\quad\left(
X^{2},P^{2},K^{2}\right)
\]
Denote%
\[
\left\{
\begin{array}
[c]{l}%
\left(  \Delta X_{t},\Delta P_{t},\Delta K_{t}\right)  :=\left(  X_{t}%
^{1}-X_{t}^{2},P_{t}^{1}-P_{t}^{2},K_{t}^{1}-K_{t}^{2}\right)  \medskip\\
\left(  \Delta f_{t},\Delta g_{t}\right)  :=\left(  f\left(  t,X_{t}%
^{1}\right)  -f\left(  t,X_{t}^{2}\right)  ,g\left(  t,X_{t}^{1}\right)
-g\left(  t,X_{t}^{2}\right)  \right)  \medskip\\
\Delta f_{phase,t}:=f_{phase}\left(  t,X_{t}^{1},P_{t}^{1}\right)
-f_{phase}\left(  t,X_{t}^{2},P_{t}^{2}\right)  ,\medskip\\
\Delta g_{phase,t}:=g_{phase}\left(  t,X_{t}^{1},P_{t}^{1}\right)
-g_{phase}\left(  t,X_{t}^{2},P_{t}^{2}\right)  .
\end{array}
\right.
\]
The phase-space system for the difference becomes:%
\[
\left\{
\begin{array}
[c]{l}%
d\Delta X_{t}=-s_{t}d\Delta K_{t}-g_{t}J\Delta P_{t}dt+\Delta f_{t}dt+\Delta
g_{t}dW_{t}\medskip\\
d\Delta P_{t}=-g_{t}Jd\Delta K_{t}-\Gamma_{t}\Delta P_{t}dt+\Delta
f_{phase,t}dt+\Delta g_{phase,t}dW_{t}%
\end{array}
\right.
\]
Let us now construct a suitable energy functional.

Fix an arbitrary constant $\rho\geq0$ and define%
\begin{equation}
\lambda_{t}:=s_{t}\left(  s_{t}+\rho\right)  -g_{t}^{2}=\alpha_{t}\cos
\theta_{t}\left(  \alpha_{t}\cos\theta_{t}+\rho\right)  -\alpha_{t}^{2}%
\sin^{2}\theta_{t}~,\quad t\in\left[  0,T\right]  . \label{lbl}%
\end{equation}
Assume that there exists $\lambda_{0}>0$ such that $\lambda_{t}\geq\lambda
_{0}>0,$ for all $t\in\left[  0,T\right]  $. This condition is not very
restrictive for acceptable values for $\alpha$ and $\theta.$ For example, for
$\rho=0,$ $\left\vert \theta_{t}\right\vert <\pi/4.$

Define the block matrix:%
\[
Q_{t}:=\left(
\begin{array}
[c]{cc}%
\left(  s_{t}+\rho\right)  I_{2} & g_{t}J\medskip\\
-g_{t}J & s_{t}I_{2}%
\end{array}
\right)
\]
Since $\left(  g_{t}J\right)  ^{T}=g_{t}J^{T}=-g_{t}J$ it follows that $Q_{t}$
is a symmetric matrix, for all $t\in\left[  0,T\right]  $.

Define the relative energy, for $Z_{t}:=\left(  \Delta X_{t},\Delta
P_{t}\right)  ^{t},$%
\begin{equation}
\mathcal{E}_{t}:=\dfrac{1}{2}\left\langle Q_{t}Z_{t},Z_{t}\right\rangle
\label{Relative energy}%
\end{equation}
In explicit form, it becomes:%
\begin{equation}%
\begin{array}
[c]{ccl}%
\mathcal{E}_{t} & = & \dfrac{1}{2}\left\langle Q_{t}\left(
\begin{array}
[c]{c}%
\Delta X_{t}\\
\Delta P_{t}%
\end{array}
\right)  ,\left(
\begin{array}
[c]{c}%
\Delta X_{t}\\
\Delta P_{t}%
\end{array}
\right)  \right\rangle =\dfrac{1}{2}\left\langle \left(
\begin{array}
[c]{cc}%
\left(  s_{t}+\rho\right)  I_{2} & g_{t}J\medskip\\
-g_{t}J & s_{t}I_{2}%
\end{array}
\right)  \left(
\begin{array}
[c]{c}%
\Delta X_{t}\\
\Delta P_{t}%
\end{array}
\right)  ,\left(
\begin{array}
[c]{c}%
\Delta X_{t}\\
\Delta P_{t}%
\end{array}
\right)  \right\rangle \medskip\\
& = & \dfrac{1}{2}\left\langle \left(
\begin{array}
[c]{c}%
\left(  s_{t}+\rho\right)  \Delta X_{t}+g_{t}J\Delta P_{t}\medskip\\
-g_{t}J\Delta X_{t}+s_{t}\Delta P_{t}%
\end{array}
\right)  ,\left(
\begin{array}
[c]{c}%
\Delta X_{t}\\
\Delta P_{t}%
\end{array}
\right)  \right\rangle \medskip\\
& = & \dfrac{1}{2}\left[  \left(  s_{t}+\rho\right)  \left\vert \Delta
X_{t}\right\vert ^{2}+g_{t}\left(  \left\langle J\Delta P_{t},\Delta
X_{t}\right\rangle -\left\langle \Delta X_{t},J^{T}\Delta P_{t}\right\rangle
\right)  +s_{t}\left\vert \Delta P_{t}\right\vert ^{2}\right]  \medskip\\
& \overset{J^{T}=-J}{=} & \dfrac{1}{2}\left(  s_{t}+\rho\right)  \left\vert
\Delta X_{t}\right\vert ^{2}+g_{t}\left\langle J\Delta P_{t},\Delta
X_{t}\right\rangle +\dfrac{1}{2}s_{t}\left\vert \Delta P_{t}\right\vert ^{2}.
\end{array}
\label{expanded form energy}%
\end{equation}
Equivalently,%
\[%
\begin{array}
[c]{ccl}%
2\mathcal{E}_{t} & = & \left(  s_{t}+\rho\right)  \left\vert \Delta
X_{t}\right\vert ^{2}+2g_{t}\left\langle J\Delta P_{t},\Delta X_{t}%
\right\rangle +s_{t}\left\vert \Delta P_{t}\right\vert ^{2}\medskip\\
& = & \left(  s_{t}+\rho\right)  \left\vert \Delta X_{t}+\dfrac{g_{t}}%
{s_{t}+\rho}J\Delta P_{t}\right\vert ^{2}+\left(  s_{t}-\dfrac{g_{t}^{2}%
}{s_{t}+\rho}\right)  \left\vert \Delta P_{t}\right\vert ^{2}\medskip\\
& \overset{(\ref{lbl})}{=} & \left(  s_{t}+\rho\right)  \left\vert \Delta
X_{t}+\dfrac{g_{t}}{s_{t}+\rho}J\Delta P_{t}\right\vert ^{2}+\dfrac
{\lambda_{t}}{s_{t}+\rho}\left\vert \Delta P_{t}\right\vert ^{2}.
\end{array}
\]
It follows that the family of symmetric matrices $Q$ is uniformly positive
defined. Also, there exist the constants $0<c_{Q}\leq C_{Q}<+\infty$ such that%
\begin{equation}
c_{Q}\left(  \left\vert \Delta X_{t}\right\vert ^{2}+\left\vert \Delta
P_{t}\right\vert ^{2}\right)  \leq2\mathcal{E}_{t}\leq C_{Q}\left(  \left\vert
\Delta X_{t}\right\vert ^{2}+\left\vert \Delta P_{t}\right\vert ^{2}\right)
,\quad\forall t\in\left[  0,T\right]  \label{positiv. defined of Q}%
\end{equation}
From the definition of $s$ and $g$, we can assume that they are bounded and
$s_{t}\geq s_{0}>0$.

We use the notations:%
\begin{equation}
\left\{
\begin{array}
[c]{l}%
Z_{t}:=\left(
\begin{array}
[c]{c}%
\Delta X_{t}\\
\Delta P_{t}%
\end{array}
\right)  ,\quad B_{t}:=\left(
\begin{array}
[c]{c}%
s_{t}I_{2}\\
g_{t}J
\end{array}
\right)  ,\quad\Delta F_{t}:=\left(
\begin{array}
[c]{c}%
\Delta f_{t}\\
\Delta f_{phase,t}%
\end{array}
\right)  \medskip\\
\Delta\Sigma_{t}:=\left(
\begin{array}
[c]{c}%
\Delta g_{t}\\
\Delta g_{phase,t}%
\end{array}
\right)  \quad\text{and}\quad A_{t}:=\left(
\begin{array}
[c]{cc}%
0 & g_{t}J\medskip\\
0 & \Gamma_{t}%
\end{array}
\right)  .
\end{array}
\right.  \label{notations uniq}%
\end{equation}
For the difference system, the bounded variation term is%
\[
-B_{t}d\Delta K_{t}~,
\]
and, therefore, in It\^{o}'s formula for the energy $\mathcal{E}_{t}$, the
term produced by $d\Delta K_{t}$ is: $-\left\langle Q_{t}Z_{t},B_{t}d\Delta
K_{t}\right\rangle $.

Since%
\[
B_{t}^{T}Q_{t}=\left(
\begin{array}
[c]{cc}%
s_{t}I_{2} & -g_{t}J
\end{array}
\right)  \left(
\begin{array}
[c]{cc}%
\left(  s_{t}+\rho\right)  I_{2} & g_{t}J\medskip\\
-g_{t}J & s_{t}I_{2}%
\end{array}
\right)  =\left(
\begin{array}
[c]{cc}%
\lambda_{t}I_{2} & 0
\end{array}
\right)  ,
\]
it follows%
\begin{equation}
-\left\langle Q_{t}Z_{t},B_{t}d\Delta K_{t}\right\rangle =-\left\langle
B_{t}^{T}Q_{t}Z_{t},d\Delta K_{t}\right\rangle =-\left\langle \left(
\begin{array}
[c]{cc}%
\lambda_{t}I_{2} & 0
\end{array}
\right)  \left(
\begin{array}
[c]{c}%
\Delta X_{t}\\
\Delta P_{t}%
\end{array}
\right)  ,d\Delta K_{t}\right\rangle =-\lambda_{t}\left\langle \Delta
X_{t},d\Delta K_{t}\right\rangle \label{important sign}%
\end{equation}
But, since $dK_{t}^{i}\in\partial\varphi\left(  X_{t}^{i}\right)  \left(
dt\right)  $, for $i=1,2$ and for all $t\in\left[  0,T\right]  $, the
monotonicity of $\partial\varphi$ implies%
\begin{equation}%
{\displaystyle\int_{0}^{t}}
\left\langle \Delta X_{s},d\Delta K_{s}\right\rangle \geq0,\quad\text{for all
}t\in\left[  0,T\right]  . \label{monotonicity condition uniq}%
\end{equation}
Remark that, in order to obtain, from (\ref{monotonicity condition uniq}), the
contribution of the BV term with a favorable sign in It\^{o}'s formula,%
\begin{equation}
-%
{\displaystyle\int_{0}^{t}}
\lambda_{s}\left\langle \Delta X_{s},d\Delta K_{s}\right\rangle \leq0,\quad
t\in\left[  0,T\right]  \label{contrib of multiv term}%
\end{equation}
the easiest way is to choose $\lambda_{t}\equiv\lambda>0$ fixed. In other
words, we have to take $t\mapsto\rho_{t}>0$ instead of a fixed constant $\rho
$, verifying%
\[
\lambda=\lambda_{t}=s_{t}\left(  s_{t}+\rho_{t}\right)  -g_{t}^{2}%
\quad\Leftrightarrow\quad\rho_{t}=\frac{\lambda+g_{t}^{2}-s_{t}^{2}}{s_{t}%
}>0,\quad\text{for }\lambda>0\text{ big enough.}%
\]
Also, the regularity of $s$ and $g$ implies $\rho\in C^{1}\left(  \left[
0,T\right]  \right)  $ and the inequality (\ref{contrib of multiv term}) takes
place, with $\lambda$ a positive constant in front of the Riemann-Stieltjes
integral.\medskip

Supposing first that $s,g,\rho\in W^{1,\infty}\left(  0,T\right)  $ and
$\Gamma$ is bounded, we apply It\^{o}'s formula for the time dependent
relative energy, given in compact form in (\ref{Relative energy}), or, in
expanded form in (\ref{expanded form energy}):%
\[
\left(  t,x,p\right)  \mapsto u\left(  t,x,p\right)  :=\frac{1}{2}\left(
s_{t}+\rho_{t}\right)  \left\vert x\right\vert ^{2}+g_{t}\left\langle
Jp,x\right\rangle +\frac{1}{2}s_{t}\left\vert p\right\vert ^{2}.
\]
The dynamic of $Z$ is%
\begin{equation}
dZ_{t}=-B_{t}d\Delta K_{t}-A_{t}Z_{t}dt+\Delta F_{t}dt+\Delta\Sigma_{t}%
dW_{t}~. \label{dynZ}%
\end{equation}
Therefore, It\^{o}'s formula and (\ref{important sign}) give us%
\[%
\begin{array}
[c]{ccl}%
d\mathcal{E}_{t} & = & \dfrac{1}{2}\left\langle \dot{Q}_{t}Z_{t}%
,Z_{t}\right\rangle dt+\left\langle Q_{t}Z_{t},dZ_{t}\right\rangle +\dfrac
{1}{2}\mathrm{Tr}\left(  Q_{t}\Delta\Sigma_{t}\Delta\Sigma_{t}^{T}\right)
dt\medskip\\
& \overset{(\ref{dynZ})}{=} & -\lambda\left\langle \Delta X_{t},d\Delta
K_{t}\right\rangle +\left[  \dfrac{1}{2}\left\langle \dot{Q}_{t}Z_{t}%
,Z_{t}\right\rangle -\left\langle Q_{t}Z_{t},A_{t}Z_{t}\right\rangle
+\left\langle Q_{t}Z_{t},\Delta F_{t}\right\rangle \right]  dt\medskip\\
&  & +\dfrac{1}{2}\mathrm{Tr}\left(  Q_{t}\Delta\Sigma_{t}\Delta\Sigma_{t}%
^{T}\right)  dt+\left\langle Q_{t}Z_{t},\Delta\Sigma_{t}dW_{t}\right\rangle
\end{array}
\]
Under integral form, it becomes%
\begin{equation}%
\begin{array}
[c]{l}%
\mathcal{E}_{t}+\lambda%
{\displaystyle\int_{0}^{t}}
\left\langle \Delta X_{s},d\Delta K_{s}\right\rangle \medskip\\
\quad\quad=\mathcal{E}_{0}+%
{\displaystyle\int_{0}^{t}}
\left[  \dfrac{1}{2}\left\langle \dot{Q}_{s}Z_{s},Z_{s}\right\rangle
-\left\langle Q_{s}Z_{s},A_{s}Z_{s}\right\rangle +\left\langle Q_{s}%
Z_{s},\Delta F_{s}\right\rangle \right]  ds\medskip\\
\quad\quad+\dfrac{1}{2}%
{\displaystyle\int_{0}^{t}}
\mathrm{Tr}\left(  Q_{s}\Delta\Sigma_{s}\Delta\Sigma_{s}^{T}\right)  ds+%
{\displaystyle\int_{0}^{t}}
\left\langle Q_{s}Z_{s},\Delta\Sigma_{s}dW_{s}\right\rangle .
\end{array}
\label{energy in integral form}%
\end{equation}
From the boundedness of $s,g,\rho\in W^{1,\infty}\left(  0,T\right)  $ one can
obtain that there exist some positive constant $C_{Q},C_{\dot{Q}}~$and
$C_{A}~,$ which form can be explicitly computed, such that $\left\vert
\left\vert Q_{t}\right\vert \right\vert _{\mathrm{op}}\leq C_{Q},$ $\left\vert
\left\vert \dot{Q}_{t}\right\vert \right\vert _{\mathrm{op}}\leq C_{\dot{Q}}$,
and $\left\vert \left\vert A_{t}\right\vert \right\vert _{\mathrm{op}}\leq
C_{A}~,$ a.e. $t\in\left[  0,T\right]  $. It follows%
\[
\left\vert \dfrac{1}{2}\left\langle \dot{Q}_{s}Z_{s},Z_{s}\right\rangle
\right\vert \leq\dfrac{1}{2}\left\vert \left\vert \dot{Q}\right\vert
\right\vert _{\mathrm{op}}\left\vert Z_{t}\right\vert ^{2}\leq\frac{C_{\dot
{Q}}}{2}\left\vert Z_{t}\right\vert ^{2}\leq\frac{C_{\dot{Q}}}{c_{Q}%
}\mathcal{E}_{t}~,
\]
since $\left\vert Z_{t}\right\vert ^{2}=\left\vert \Delta X_{t}\right\vert
^{2}+\left\vert \Delta P_{t}\right\vert ^{2}\leq\frac{2}{c_{Q}}\mathcal{E}%
_{t}~$, according to the coercivity condition (\ref{positiv. defined of Q}).
Also,%
\[
A_{t}Z_{t}=\left(
\begin{array}
[c]{cc}%
0 & g_{t}J\medskip\\
0 & \Gamma_{t}%
\end{array}
\right)  \left(
\begin{array}
[c]{c}%
\Delta X_{t}\\
\Delta P_{t}%
\end{array}
\right)  =\left(
\begin{array}
[c]{c}%
g_{t}J\Delta P_{t}\\
\Gamma_{t}\Delta P_{t}%
\end{array}
\right)
\]
and, using again the coercivity condition (\ref{positiv. defined of Q}),%
\[
\left\vert -\left\langle Q_{t}Z_{t},A_{t}Z_{t}\right\rangle \right\vert
\leq\left\vert \left\vert Q_{t}\right\vert \right\vert _{\mathrm{op}%
}\left\vert \left\vert A_{t}\right\vert \right\vert _{\mathrm{op}}\left\vert
Z_{t}\right\vert ^{2}\leq\sup_{t\in\left[  0,T\right]  }\left\vert \left\vert
Q_{t}\right\vert \right\vert _{\mathrm{op}}C_{A}\left\vert Z_{t}\right\vert
^{2}\leq\frac{2C_{Q}^{\ast}C_{A}}{c_{Q}}\mathcal{E}_{t}~.
\]
Consequently,%
\[
\left\vert \dfrac{1}{2}\left\langle \dot{Q}_{t}Z_{t},Z_{t}\right\rangle
-\left\langle Q_{t}Z_{t},A_{t}Z_{t}\right\rangle \right\vert \leq\left(
\frac{C_{\dot{Q}}}{c_{Q}}+\frac{2C_{Q}^{\ast}C_{A}}{c_{Q}}\right)
\mathcal{E}_{t}=C\mathcal{E}_{t}~,
\]
where $C>0$ is independent of $t$.

Similarly, the Lipschitz properties of the drifts and diffusions provide%
\[
\left\vert \left\langle Q_{t}Z_{t},\Delta F_{t}\right\rangle \right\vert \leq
C\mathcal{E}_{t},\quad\left\vert \left\vert \Delta\Sigma_{t}\right\vert
\right\vert ^{2}\leq L^{2}\left(  \left\vert \Delta X_{t}\right\vert
^{2}+\left\vert \Delta P_{t}\right\vert ^{2}\right)  \leq C\mathcal{E}_{t}%
\]
and, since $Q_{t}$ is bounded, $\mathrm{Tr}\left(  Q_{t}\Delta\Sigma_{t}%
\Delta\Sigma_{t}^{T}\right)  \leq C\mathcal{E}_{t}.$

Putting all the above estimates into (\ref{energy in integral form}), and
using (\ref{contrib of multiv term}) for $\lambda=const.$, we obtain%
\begin{equation}
\mathcal{E}_{t}\leq\mathcal{E}_{0}+C%
{\displaystyle\int_{0}^{t}}
\mathcal{E}_{s}ds+M_{t}~,\quad\text{where}\quad M_{t}:=%
{\displaystyle\int_{0}^{t}}
\left\langle Q_{s}Z_{s},\Delta\Sigma_{s}dW_{s}\right\rangle ,\quad\forall
t\in\left[  0,T\right]  . \label{almost final}%
\end{equation}
For the It\^{o} integral, we apply the Burkholder--Davis--Gundy inequality and
we have, for a positive Young constant $\eta$, which we will choose later,%
\begin{equation}%
\begin{array}
[c]{ccl}%
\mathbb{E}\left(  \sup\limits_{0\leq r\leq t}\left\vert M_{r}\right\vert
\right)  & \leq & C\mathbb{E}\left[  \left(
{\displaystyle\int_{0}^{t}}
\left\vert Q_{s}Z_{s}\right\vert ^{2}\left\vert \left\vert \Delta\Sigma
_{s}\right\vert \right\vert _{F}^{2}ds\right)  ^{1/2}\right]  \medskip\\
& \leq & \eta\mathbb{E}\left(  \sup\limits_{0\leq r\leq t}\mathcal{E}%
_{r}\right)  +C_{\eta}%
{\displaystyle\int_{0}^{t}}
\mathbb{E}\left(  \sup\limits_{0\leq u\leq s}\mathcal{E}_{u}\right)  ds.
\end{array}
\label{mtg.est}%
\end{equation}
Take $\sup_{r\in\left[  0,t\right]  }$ in (\ref{almost final}) and then apply
the expectation. We obtain, by denoting $\mathcal{E}_{t}^{\ast}:=\sup_{0\leq
r\leq t}\mathcal{E}_{r}$~,%
\[%
\begin{array}
[c]{rcl}%
\mathcal{E}_{t}^{\ast}=\sup\limits_{0\leq r\leq t}\mathcal{E}_{t} & \leq &
\mathcal{E}_{0}+C\sup\limits_{0\leq r\leq t}%
{\displaystyle\int_{0}^{r}}
\mathcal{E}_{s}+\sup\limits_{0\leq r\leq t}\left\vert M_{r}\right\vert
=\mathcal{E}_{0}+C%
{\displaystyle\int_{0}^{t}}
\mathcal{E}_{s}ds+\sup\limits_{0\leq r\leq t}\left\vert M_{r}\right\vert
\medskip\\
& \leq & \mathcal{E}_{0}+C%
{\displaystyle\int_{0}^{t}}
\sup\limits_{0\leq u\leq s}\mathcal{E}_{u}ds+\sup\limits_{0\leq r\leq
t}\left\vert M_{r}\right\vert \medskip\\
& \leq & \mathcal{E}_{0}+C%
{\displaystyle\int_{0}^{t}}
\mathcal{E}_{s}^{\ast}ds+\sup\limits_{0\leq r\leq t}\left\vert M_{r}%
\right\vert .\quad\quad\text{Apply }\mathbb{E}\text{ and it follows}\medskip\\
\mathbb{E}\left(  \mathcal{E}_{t}^{\ast}\right)  & \leq & \mathbb{E}\left(
\mathcal{E}_{0}\right)  +C%
{\displaystyle\int_{0}^{t}}
\mathbb{E}\left(  \mathcal{E}_{s}^{\ast}\right)  ds+\mathbb{E}\left(
\sup\limits_{0\leq r\leq t}\left\vert M_{r}\right\vert \right)  \medskip\\
& \overset{(\ref{mtg.est})}{\leq} & \mathbb{E}\left(  \mathcal{E}_{0}\right)
+\left(  C+C_{\eta}\right)
{\displaystyle\int_{0}^{t}}
\mathbb{E}\left(  \mathcal{E}_{s}^{\ast}\right)  ds+\eta\mathbb{E}\left(
\mathcal{E}_{t}^{\ast}\right)
\end{array}
\]
Finally,%
\[
\left(  1-\eta\right)  \mathbb{E}\left(  \mathcal{E}_{t}^{\ast}\right)
\leq\mathbb{E}\left(  \mathcal{E}_{0}\right)  +\left(  C+C_{\eta}\right)
{\displaystyle\int_{0}^{t}}
\mathbb{E}\left(  \mathcal{E}_{s}^{\ast}\right)  ds,\quad\forall t\in\left[
0,T\right]  .
\]
Let us select the absorption constant $\eta=1/2$ and apply Gronwall's
inequality. Hence%
\[
\mathbb{E}\left(  \sup_{0\leq r\leq t}\mathcal{E}_{r}\right)  \leq
C_{T}\mathbb{E}\left(  \mathcal{E}_{0}\right)
\]
In order to obtain the uniqueness, take $\Delta X_{0}=\Delta P_{0}=0.$ Then
$\mathcal{E}_{0}=0$ and it follows that $\mathcal{E}_{r}=0,$ for all
$r\in\left[  0,t\right]  ,$ $\mathbb{P-}$a.s. The coercivity of the energy
implies%
\[
\Delta X_{r}=0,\quad\Delta P_{r}=0,\quad\forall r\in\left[  0,t\right]
,\quad\mathbb{P-}\text{a.s.}%
\]
But $t\in\left[  0,T\right]  $ is arbitrary, which gives $\Delta X_{t}=0,$
$\Delta P_{t}=0,$ $\forall t\in\left[  0,T\right]  ,$ $\mathbb{P-}$a.s.
Consider now the first equation from the difference Phase-Space system. Since
all the components except $s_{t}d\Delta K_{t}$ are equal with zero and $s_{t}$
takes only positive values, it follows $d\Delta K_{t}\equiv0$. Finally,
$\Delta K_{t}=0,$ $\forall t\in\left[  0,T\right]  ,$ $\mathbb{P-}$a.s. and
the pathwise uniqueness follows:%
\[
\left(  X^{1},P^{1},K^{1}\right)  =\left(  X^{2},P^{2},K^{2}\right)
,\quad\mathbb{P-}\text{a.s.}%
\]
The uniqueness proof is now complete.\hfill
\end{proof}

\end{document}